\newcommand\tr{\text{tr}}
\newcommand{\bi}{\begin{itemize}}
\newcommand{\ei}{\end{itemize}}
\theoremstyle{plain}
\newtheorem{theorem}{Theorem}
\newtheorem{corollary}[theorem]{Corollary}
\newtheorem{lemma}[theorem]{Lemma}
\newtheorem{proposition}[theorem]{Proposition}
\theoremstyle{definition}
\newtheorem{assumption}[theorem]{Assumption}
\theoremstyle{remark}
\newtheorem{remark}{Remark}
\numberwithin{equation}{section}
\numberwithin{theorem}{section}
\title{Exploratory Randomization for Discrete-Time Linear Exponential Quadratic Gaussian (LEQG) Problem}
\author{S\'ebastien Lleo and Wolfgang Runggaldier}
\date{\today}
\begin{document}
\maketitle

\begin{abstract}
We investigate exploratory randomization for an extended linear-exponential-quadratic-Gaussian (LEQG) control problem in discrete time. This extended control problem is related to the structure of risk-sensitive investment management applications. We introduce exploration through a randomization of the control. Next, we apply the duality between free energy and relative entropy to reduce the LEQG problem to an equivalent risk-neutral LQG control problem with an entropy regularization term, see, e.g. \citet{daipraConnectionsStochasticControl1996}, for which we present a solution approach based on Dynamic Programming. Our approach, based on the energy-entropy duality may also be considered as leading to a justification for the use, in the literature, of an entropy regularization when applying a randomized control.
\end{abstract}

\maketitle


\textbf{keywords}: risk-sensitive control, free energy-entropy duality, stochastic differential games, Linear Exponential Quadratic Gaussian control, Linear Quadratic Gaussian games, exploratory controls.

\section{Introduction}

Real-world stochastic control problems are often characterized by incomplete information about the state or the model.  Traditionally, filtering has been used to address incomplete information \citep[see][]{bensoussanStochasticControlPartially1984,bensoussanApproximationMethodStochastic1987}. More recently, reinforcement learning has emerged as a promising alternative \citep[see][]{wangReinforcementLearningContinuous2020,jiaQLearningContinuousTime,jiaPolicyEvaluationTemporalDifference2022,jiaPolicyGradientActor2022,hamblyPolicyGradientMethods2021}. These techniques rely on iterative interactions with the environment, balancing exploration—randomized actions to acquire information—and exploitation—actions that maximize a reward function based on available data. Although reinforcement learning evolved from Markov Decision Processes, integrating it with stochastic control remains challenging. \citet{wangReinforcementLearningContinuous2020} made a key advance in this direction by showing how relaxed controls create exploratory policies inside standard (risk-neutral) control problems formulated in continuous time. Adding an entropy penalization/regularization to the control criterion further led to a trade-off between exploration and exploitation.

A particularly important class of stochastic control problems is that of risk-sensitive control \citep[see][]{Whittle1990,befrna98,bieleckiRiskSensitiveMarkovDecision2022a,bauerleMoreRiskSensitiveMarkov2014a,bauerleMarkovDecisionProcesses2024}, with linear exponential of quadratic Gaussian (LEQG) problems as a notable subclass \citep[see][]{ja73,whittleRiskSensitiveLinearQuadratic1981a,bevs85,Bensoussan1992}. LEQG problems are especially relevant in financial applications \citep[see e.g.][and references therein]{bipl99,kuna02,dall_RSBench,DavisLleoBook2014,piteraDiscretetimeRiskSensitive2023,dall_BBL_2020,davisRisksensitiveBenchmarkedAsset2021,LleoRunggaldierSeparation24}.

In this paper, we consider a discrete-time risk-sensitive control problem of the LEQG type drawing inspiration from the results of \citet{wangReinforcementLearningContinuous2020}.  Specifically, we introduce randomized controls to emulate exploration.  We generate these randomized controls by perturbing a deterministic control with an additive Gaussian distribution. Here, the mean is the deterministic control that captures exploitation, while the variance, which provides an additional (antagonizing) control, models exploration. While implementing additive randomization in continuous-time settings poses significant technical challenges\footnote{It is worth noting that alternative continuous-time approaches, such as those developed by Huy\^en Pham and coauthors (for a recent contribution see e.g. \citet{denkertControlRandomisationApproach2024}), provide valuable insights.}, the discrete-time framework allows us to focus more directly on the exploration aspect.

Our main contribution is to solve the randomized LEQG problem using a duality between free energy and entropy \citet{daipraConnectionsStochasticControl1996}. This approach reduces the risk-sensitive randomized stochastic control problem to a risk-neutral stochastic game problem with an entropy penalization. This penalization comes from two sources: (i) the randomized controls and (ii) the transition from risk-sensitive to risk-neutral.  While \citet{wangReinforcementLearningContinuous2020} introduce an entropy penalization term as a regularisation, the entropy penalization in our approach arises naturally from the energy-entropy duality, providing a conceptual justification for its use. Unlike Wang et al., who remain in the domain of standard risk-neutral stochastic control problems, our formulation yields a stochastic game. In this game, the decision-maker selects controls to exploit available information, while an antagonistic player determines the variance of the control perturbations, governing the degree of exploration. 

The introduction of randomization into stochastic control problems is primarily motivated by its applicability in reinforcement learning. A possible methodology to implement reinforcement learning in our setup is via a policy gradient approach, whereby learning is applied to a parametrized optimal policy \citep[see for instance][]{hamblyPolicyGradientMethods2021}.  However, our paper focuses on the foundational modeling steps that precede the development of the reinforcement learning algorithm. Specifically, we demonstrate how to reduce the randomized LEQG problem to a randomized stochastic LQG game via the energy-entropy duality. We also derive the optimal policy and value function. Both turn out to be parametrized by the initial data of the model via a backward recursion, as stated in Theorem \ref{theo:main:recursions}. If the model parameters are unknown and are updated recursively by some method that we only sketch briefly, the backward recursion has to be recomputed. In this sense, our approach retains conceptual similarities with the policy gradient framework, and in particular, actor-critic methods as further illustrated in Section \ref{S.4.3} below.

 \section{Setup}\label{sec:setup}

Consider a discrete-time grid with times $t=0,1, \ldots, T$, where $T < \infty$, and let $\left(\Omega,  \mathcal{F}, \left(\mathcal{F}\right)_{t = 0, \ldots, T}, \mathbb{P} \right)$ be a filtered complete probability space, where $\mathcal{F}$ is the natural filtration and $\mathbb{P}$ is a reference probability measure. 

\subsection{Linear Exponential Quadratic (LEQG) Control Problem}

We start with a classical LEQG problem with standard noise under the measure $\mathbb{P}$. The dynamics of the controlled state $x_t \in \mathbb{R}^{d_x}, d_x \in \mathbb{N}$ is
\begin{align}\label{eq:state}
    x_{t+1} = a + A x_t + B u_t +w_t, \quad t=0, \ldots, T-1,
\end{align}
where  $a \in \mathbb{R}^{d_x}$, $A \in \mathbb{R}^{d_x\times d_x}$, $B \in \mathbb{R}^{d_x\times d_u}$. Here, $u_t \in \mathbb{R}^{d_u}, d_u \in \mathbb{N}$ is the control and $w_t$ is a $d_x$-dimensional Gaussian system noise, that is $w_t \sim \mathcal{N}\left(0,\Lambda_t \right)$ where $\Lambda_t  \in \mathbb{R}^{d_x \times d_x}, t=0,\ldots, T-1$. The system noises are serially independent so $\mathbf{E} \left[ (w_t)'w_s \right] = 0, t \neq s$. 

The risk-sensitive criterion $J$ to be maximized is defined as
\begin{align}\label{eq:criterion:J}
    J(u;T,\theta) = -\frac{1}{\theta} \ln \mathbf{E} \left[ e^{\theta G_T} \right],
\end{align}
where $\theta \in (-1,0) \cup (0,\infty)$ is the risk-sensitivity, $G_T$ is the cost defined as
\begin{align}\label{eq:reward}
    G_T := \sum_{t=0}^{T-1} \left( x_t' M x_t  + u_t' N_t u_t  + u_t' Q x_t + x_t' m  + u_t' n \right)  + x_T' M_T x_T + x_T' m_T,   
\end{align}
where $M \in \mathbb{R}^{d_x \times d_x}$ is semipositive definite and symmetric, $N_t \in \mathbb{R}^{d_u \times d_u}, t=1,\ldots,T-1$ is also semipositive definite and symmetric, $Q \in \mathbb{R}^{d_u \times d_x}$, $m \in \mathbb{R}^{d_x}$, $n \in \mathbb{R}^{d_u}$, $M_T \in \mathbb{R}^{d_x \times d_x}$, and $m_T \in \mathbb{R}^{d_x}$. For convenience, we also introduce the exponentially transformed criterion $I$ defined as 
\begin{align}\label{eq:criterion:I}
    I(u;T,\theta) = e^{-\theta J(u;T,\theta)} = \mathbf{E} \left[ e^{\theta G_T} \right],
\end{align}
which is to be minimized.

\begin{remark}
The standard formulation of risk-sensitive control problems does not contain a constant $a$ in the state dynamics or a cross-term $u_t Q x_t$ in the running cost. These additions are required in investment management applications of continuous-time risk-sensitive control, see for example \citet{DavisLleoBook2014}. Including them ensures that our model can be used to solve discretized versions of the risk-sensitive asset management model by \citet{bipl99,kuna02} and of the risk-sensitive benchmarked asset management model by \citet{dall_RSBench}.
\end{remark}

\subsection{Exploration via Randomized Controls}

The classical LEQG model may not provide a perfect description of reality, and we may not have sufficient data to estimate our model parameters with a high degree of accuracy. We employ exploratory randomized controls to address these issues. \citet{wangReinforcementLearningContinuous2020} already proposed using randomized controls in continuous-time stochastic control problems to create the kind of exploration framework that is already customary in reinforcement learning methods. These authors showed that under suitable assumptions, adding an entropy-related penalization to their stochastic control framework produces an optimal exploration-exploitation trade-off.  

Our model achieves randomized exploration by perturbing a deterministic control $\bar{u}_t$, that will be determined by optimization, with a random perturbation $v_t \in \mathbb{R}^{d_u}$. This perturbation is Gaussian, that is,  $v_t \sim \mathcal{N} \left(0, \Xi_t\right)$ where the covariance matrix $\Xi_t \in \mathbb{R}^{d_u \times d_u}, t=0,\ldots,T-1$, and $v_t$ is serially independent. Hence, the randomized control is of the form $u_t = \bar{u}_t + v_t \sim \mathcal{N} \left(\bar{u}_t, \Xi_t\right)$  and it admits a representation as a measure $\pi\left(du;\bar{u}_t\right) \sim \mathcal{N} \left(\bar{u}_t, \Xi_t\right)$.

\begin{remark}
    A motivation for letting the covariance matrix $\Xi$ depend on time is to have the possibility of reducing the breadth of exploration over time, consistently with the reinforcement learning literature. Exploration is more valuable in early times when little information is available. As more information becomes available, exploration gradually loses its value. A further motivation will be mentioned in the Conclusions section \ref{S.4.4} below.
    
\end{remark}

Taking into consideration the randomization of controls, we express the elements $\omega$ of the underlying sample space $\Omega$ as $$\omega = \left(\omega^w,\omega^v\right):= \left(w_0, \ldots,w_{T-1}, v_0, \ldots, v_{T-1}\right).$$

Next, we rewrite the state dynamics at \eqref{eq:state} as
\begin{align}\label{eq:state:rand}
    x_{t+1} 
    &= a + A x_t + B \left(\bar{u}_t + v_t \right)+w_t,
\end{align}
for $t=0, \ldots, T-1$. Inspired by Wang et al., we express the cost as: 
\begin{align}\label{eq:reward:rand}
    G_T 
    :=& \sum_{t=0}^{T-1} \left[ x_t' M x_t  + \int u_t' N_t u_t \pi\left(du;\bar{u}_t\right) + \int u_t' Q x_t \pi\left(du;\bar{u}_t\right) 
    \right.
                                \\
    & \left. 
    + x_t' m
    + \int u_t' n \pi\left(du;\bar{u}_t\right) \right]  
    + x_T' M_T x_T + x_T' m_T
                                \nonumber\\
    =& \sum_{t=0}^{T-1} \left[ x_t' M x_t  + \tr \left(\Xi_t N_t\right) + \bar{u}_t' N_t \bar{u}_t  + \bar{u}_t ' Q x_t + x_t' m  + \bar{u}_t' n \right]  
                                \nonumber\\
    & + x_T' M_T x_T + x_T' m_T.
                                \nonumber
\end{align}

\section{Solving the Randomized LEQG Control Problem via the Free Energy-Entropy Duality}\label{S.3}

We start this section by recalling key results from \citet{daipraConnectionsStochasticControl1996} about the duality between free energy and entropy. We shall next use the energy-entropy duality to associate to the given randomized LEQG problem a risk-neutral randomized stochastic game with a penalization given by an appropriate relative entropy.  This penalization acts on both the transition from a risk-sensitive to risk-neutral problem and the randomization of controls.


As a reference for the reader, Table~\ref{tab:variables:summary} lists the key variables and parameters used in Sections 2 and 3.

\begin{table}
    \centering
    \resizebox{\textwidth}{!}{\begin{tabular}{cclc}
        \hline\\
        Variable/ & Dimension   & Definition & Introduced \\
        Parameter & &  & in Section \\
        \hline\\

         \multicolumn{4}{l}{\underline{State and total cost:}} \\
         
         $x_t$  & $\mathbb{R}^{d_x}$ & State variable & 2.1\\
         
         $G_T$  & $\mathbb{R}$ & Total cost & 2.1 \\
        
         \hline\\
         
         \multicolumn{2}{l}{\underline{Under the measure $\mathbb{P}$:}} & 
         \multicolumn{2}{l}{$x_{t+1} = a + A x_t + B u_t +w_t$}\\

         \multicolumn{2}{l}{ } &
         \multicolumn{2}{l}{\small $G_T = \sum_{t=0}^{T-1} \left( x_t' M x_t  + u_t' N_t u_t  + u_t' Q x_t + x_t' m  + u_t' n \right)$} \\
         \multicolumn{2}{l}{ } &
         \multicolumn{2}{l}{\small $\phantom{G_T =} + x_T' M_T x_T + x_T' m_T$} \\

         \\
        
         $w_t$ & $\mathbb{R}^{d_x}$ & State noise: $w_t \sim \mathcal{N}\left(0,\Lambda_t \right)$ & 2.1 \\
   
         $u_t$ & $\mathbb{R}^{d_u}$ & 1. Generic control for the initial LEQG problem & 2.1 \\
                  &  & 2. Exploratory control: $u_t = \bar{u}_t + v_t \sim \mathcal{N} \left(\bar{u}_t, \Xi_t\right)$  & 2.2 \\
         $\bar{u}_t$ & $\mathbb{R}^{d_u}$ & Deterministic control & 2.2 \\
         $v_t$   & $\mathbb{R}^{d_u}$ & Exploration noise: & 2.2 \\
                &  & $v_t \sim \mathcal{N}\left(0,\Xi_t \right)$ & \\

                  \hline\\
         \multicolumn{2}{l}{\underline{Under the measure $\mathbb{P}^{\bar\gamma,\bar\eta}$:}} &
         \multicolumn{2}{l}{$x_{t+1} 
= a + A x_t + B \left(\bar{u}_t + v^{\bar\eta}_t \right)+ w^{\bar\gamma}_t$}\\

         \multicolumn{2}{l}{ } &
         \multicolumn{2}{l}{\small $G_T = \sum_{t=0}^{T-1} \Big[
        x_t' M x_t  
        + \tr \left( \Xi_t N_t \right) + \left( \bar{u}_t + \bar\eta_t \right)'N_t\left( \bar{u}_t + \bar\eta_t \right)
                \Big]$} \\
        \multicolumn{2}{l}{ } &
        \multicolumn{2}{l}{\small $\phantom{G_T =} + \left( \bar{u}_t + \bar\eta_t \right)' Q x_t 
        + x_t' m  
        + \left( \bar{u}_t + \bar\eta_t \right)' n 
        \Big]$} \\
        \multicolumn{2}{l}{ } &
        \multicolumn{2}{l}{\small $\phantom{G_T =} + x_T' M_T x_T + x_T' m_T$} \\

         \\ 
         
         $\bar{\gamma}$ & $\mathbb{R}^{d_x}$  & Change of measure process (control) & 3.2\\
         
         $\bar{\eta}$   & $\mathbb{R}^{d_u}$ & Change of measure process (control) & 3.2\\
         
         $w^{\bar\gamma}_t$ & $\mathbb{R}^{d_x}$ & State noise: $w^{\bar\gamma}_t\sim \mathcal{N} \left(\bar\gamma_t, \Lambda_t \right)$ & 3.2\\

         $v^{\bar\eta}_t$ & $\mathbb{R}^{d_x}$ & Exploration noise: $v^{\bar\eta}_t \sim \mathcal{N} \left(\bar\eta_t, \Xi_t\right)$ & 3.2\\

        \hline\\

        \multicolumn{4}{l}{\underline{Value function for the stochastic differential game:}} \\
        $V(T;\theta)$  & $\mathbb{R}$   & Value of the stochastic differential game: & 3.2 \\
        & & {\small $V(T;\theta) 
    := \inf_{\bar{u}}\sup_{\bar\gamma,\bar\eta}
    \mathbf{E}^{\bar\gamma, \bar\eta} \left[ 
        \theta G_T 
        -\frac{1}{2} \sum_{t=0}^{T-1} \left( \bar\gamma_t' \Lambda^ {-1}_t \bar\gamma_t + \bar\eta_t' \Xi_t^{-1} \bar\eta_t\right)
    \right]$} & \\
        $V_t(x_t)$ & $\mathbb{R}$ & Recursive quadratic expression: $V_t(x_t) = \frac{1}{2}x_t' P_t x_t + x_t' p_t + r_t$ & 3.3\\

         \hline\\
         \multicolumn{4}{l}{\underline{Parameters used in the resolution of the control problem:}} \\

         $\mathfrak{A}^{(1)}_{t+1}$ & $\mathbb{R}^{d_u} \times \mathbb{R}^{d_x}$ & $\mathfrak{A}^{(1)}_{t+1} = B'P_{t+1}A + \theta Q$ & 3.3 \\ 

         $\mathfrak{a}^{(1)}_{t+1}$ & $\mathbb{R}^{d_u}$  & $\mathfrak{a}^{(1)}_{t+1} = B'P_{t+1} a + B' p_{t+1} + \theta  n$ & 3.3 \\
         
         $\mathfrak{A}^{(2)}_{t+1}$ & $\mathbb{R}^{d_x} \times \mathbb{R}^{d_x}$ & $\mathfrak{A}^{(2)}_{t+1} = P_{t+1}'A$ & 3.3 \\

         $\mathfrak{a}^{(2)}_{t+1}$ & $\mathbb{R}^{d_x}$ & $\mathfrak{a}^{(2)}_{t+1} = P_{t+1}'a + p_{t+1}$ & 3.3\\

         $\mathfrak{B}^{(1)}_{t+1}$ & $\mathbb{R}^{d_u} \times \mathbb{R}^{d_u}$    & $\mathfrak{B}^{(1)}_{t+1} = B' P_{t+1} B + 2\theta N_t$   & 3.3 \\
        
         $\mathfrak{B}^{(2)}_{t+1}$ & $\mathbb{R}^{d_x} \times \mathbb{R}^{d_x}$ & $\mathfrak{B}^{(2)}_{t+1} = - \Lambda^ {-1}_t + P_{t+1}$ & 3.3 \\

         $\mathfrak{B}^{(3)}_{t+1}$ & $\mathbb{R}^{d_u} \times \mathbb{R}^{d_u}$ & $\mathfrak{B}^{(3)}_{t+1} = -\Xi_t^{-1} + \mathfrak{B}^{(1)}_{t+1}=-\Xi_t^{-1}+  B' P_{t+1} B + 2\theta N_t $ & 3.3\\
         
         $\mathfrak{C}_{t+1}$   & $\mathbb{R}^{d_u} \times \mathbb{R}^{d_x}$ & $\mathfrak{C}_{t+1} = B'P_{t+1}$ & 3.3 \\
         
         $\mathfrak{G}_{t+1}$ & $\mathbb{R}^{d_x} \times \mathbb{R}^{d_x}$ & $\mathfrak{G}_{t+1} := \mathfrak{C}_{t+1}'\left(\mathfrak{B}^{(1)}_{t+1}\right)^{-1}\mathfrak{C}_{t+1} - \mathfrak{B}^{(2)}_{t+1}$ & 3.3 \\
         \hline
    \end{tabular}}
    \caption{Key variables and parameters appearing in Sections 2 and 3}
    \label{tab:variables:summary}
\end{table}

\subsection{General Definitions}\label{sec:energy-entropy:defs}

The \emph{free energy} of a random variable $\psi$ with respect to a reference measure $\mathbb{P}$ is, under suitable integrability conditions, 
\begin{align}
  \mathcal{E}^{\mathbb{P}}\left\{\psi\right\} = \ln \left(\int e^\psi  d\mathbb{P}\right).
\end{align} 
Consider a further measure $\mathbb{P}^\gamma$. The \emph{relative entropy} of $\mathbb{P}^\gamma$ with respect to $\mathbb{P}$ is
\begin{align}
  D_{\text{KL}}\left(\mathbb{P}^\gamma \| \mathbb{P} \right)
  = \mathbf{E}^\gamma \left[ \ln \left( \frac{d\mathbb{P}^\gamma}{d\mathbb{P}} \right)\right]
  ,
\end{align}
where $\mathbf{E}^\gamma \left[ \cdot \right]$ denotes the expectation with respect to the measure $\mathbb{P}^\gamma$. The \emph{free energy-entropy duality} relation \citep[see (ii) in Proposition 2.3. in ][]{daipraConnectionsStochasticControl1996} is
\begin{align}
  \mathcal{E}^{\mathbb{P}}\left\{\psi\right\} 
  = \sup_{\mathbb{P}^\gamma} \left\{ 
    \int \psi d\mathbb{P}^\gamma -  D_{\text{KL}}\left(\mathbb{P}^\gamma \| \mathbb{P} \right)
  \right\}.
\end{align}
\begin{remark}
Dai Pra et al. also provide an analytical formula that the optimal Radon-Nikodym derivative $\frac{d\mathbb{P}^{\gamma^*}}{d\mathbb{P}}$  must satisfy:
\begin{align}\label{eq:RNderivative:gamma:formula}
        \frac{d\mathbb{P}^{\gamma^*}}{d\mathbb{P}}
 = \frac{e^\Psi}{\mathbf{E}^{\mathbb{P}} \left[ e^\Psi \right]},
\end{align}
under suitable conditions presented in \citet{daipraConnectionsStochasticControl1996}. 
\end{remark}

\subsection{Energy-Entropy Duality for the Randomized LEQG Problem}

The energy-entropy duality associates with a given risk-sensitive LEQG problem, set with respect to an initial measure $\mathbb{P}$, a risk-neutral randomized stochastic control problem, set with respect to a transformed measure. This risk-neutral randomized stochastic control problem is penalized by an appropriate relative entropy term and formulated as a stochastic game.

This section aims to establish the relation between the risk-sensitive problem and the dual risk-neutral penalized problem. For this purpose, we introduce on the measurable space $\left(\Omega,  \mathcal{F}, \left(\mathcal{F}\right)_{t = 0, \ldots, T}\right)$ a probability measure $\mathbb{P}^{\bar\gamma,\bar\eta}$, different from $\mathbb{P}$, and parametrized by two processes $\bar\gamma = (\bar\gamma_t)_{t=0,\cdots,T} \in \mathbb{R}^{d_x}, \bar\eta = (\bar\eta_t)_{t=0,\cdots,T} \in \mathbb{R}^{d_u}$.

Under measure $\mathbb{P}^{\bar\gamma,\bar\eta}$, the state noise $w^{\bar\gamma}_t$ is  i.i.d.  Gaussian with mean $\bar\gamma_t$, which may depend on time, and covariance $\Lambda_t $, namely $w^{\bar\gamma}_t\sim \mathcal{N} \left(\bar\gamma_t, \Lambda_t \right)$.  Furthermore, the process $\bar\eta$ affects the randomized control $u_t = \bar{u}_t + v^{\bar\eta}_t$ via its noise $v^{\bar\eta}_t \sim \mathcal{N} \left(\bar\eta_t, \Xi_t\right)$, that is, the control noises are i.i.d.  Gaussian with mean $\bar\eta_t$ and covariance $\Xi_t$. We represent the randomized control as the measure $\pi\left(du;\bar{u}_t, \bar\eta_t\right) \sim \mathcal{N} \left(\bar{u}_t+\bar\eta_t, \Xi_t\right)$.

Denote by $\lambda$ the Lebesgue measure on $\left(\Omega,  \mathcal{F}, \left(\mathcal{F}\right)_{t = 0, \ldots, T}\right)$. By a slight abuse of notation, we denote by $w_t$ and $v_t$ the arguments in their respective noise densities, then 
\begin{align}\label{eq:MeasureChange:LebeguesToPbar}
    \frac{d\mathbb{P}^{\bar\gamma, \bar\eta}}{d\lambda }
    =& \prod_{t=0}^{T-1}  
        \left( \frac{1}{\sqrt{(2\pi)^{d_x} |\Lambda_t |}} \exp \left\{ -\frac{1}{2} \left( w_t - \bar\gamma_t \right)' \Lambda^ {-1}_t \left( w_t - \bar\gamma_t \right) \right\} \right)
                                \\
      & \times  
        \left( \frac{1}{\sqrt{(2\pi)^{d_u} |\Xi_t|}} \exp \left\{ -\frac{1}{2} \left( v_t - \bar\eta_t \right)' \Xi_t^{-1} \left( v_t - \bar\eta_t \right) \right\} \right).
                                \nonumber
\end{align}
The expression for $\frac{d\mathbb{P}}{d\lambda}$ takes the same form, but with $\bar\gamma_t = 0$ and $\bar\eta_t = 0$. Therefore,
\begin{align}\label{eq:MeasureChange:PbarToPcheck}
    \frac{d\mathbb{P}^{\bar\gamma, \bar\eta}}{d\mathbb{P}}
    =& \prod_{t=0}^{T-1}  
        \frac{\left( \cancel{\frac{1}{\sqrt{(2\pi)^{d_x} |\Lambda_t |}}} \exp \left\{ -\frac{1}{2} \left( w_t - \bar\gamma_t\right)' \Lambda^ {-1}_t \left( w_t - \bar\gamma_t\right) \right\} \right)}{\left( \cancel{\frac{1}{\sqrt{(2\pi)^{d_x} |\Lambda_t |}}} \exp \left\{ -\frac{1}{2} w_t' \Lambda^ {-1}_t w_t \right\} \right)}
                               \\
      & \times  
        \frac{\left( \cancel{\frac{1}{\sqrt{(2\pi)^{d_u} |\Xi_t|}}} \exp \left\{ -\frac{1}{2} \left( v_t - \bar\eta_t \right)' \Xi_t^{-1} \left( v_t - \bar\eta_t \right) \right\} \right)}{\left( \cancel{\frac{1}{\sqrt{(2\pi)^{d_u} |\Xi_t|}}} \exp \left\{ -\frac{1}{2} v_t' \Xi_t^{-1} v_t \right\} \right)}
                                \nonumber\\
    =& \prod_{t=0}^{T-1}  
        \exp \left\{ \bar\gamma_t' \Lambda^ {-1}_t w_t  -\frac{1}{2} \bar\gamma_t' \Lambda^ {-1}_t \bar\gamma_t\right\}
        \exp \left\{ \bar\eta_t' \Xi_t^{-1} v_t -\frac{1}{2} \bar\eta_t' \Xi_t^{-1} \bar\eta_t\right\}
                                \nonumber
\end{align}
The \emph{relative entropy} of $\mathbb{P}^{\bar\gamma, \bar\eta}$ with respect to $\mathbb{P}$ is
\begin{align}
  & D_{\text{KL}}\left(\mathbb{P}^{\bar\gamma, \bar\eta} \| \mathbb{P} \right)
                                        \\  
    =& \mathbf{E}^{\bar\gamma, \bar\eta} \left[ \ln \left( \frac{d\mathbb{P}^{\bar\gamma, \bar\eta}}{d\mathbb{P}} \right) \right]
                                        \nonumber\\
    =& \sum_{t=0}^{T-1} \mathbf{E}^{\bar\gamma, \bar\eta} \left[ \left\{ \bar\gamma_t' \Lambda^ {-1}_t w_t  -\frac{1}{2} \bar\gamma_t' \Lambda^ {-1}_t \bar\gamma_t\right\} + \left\{ \bar\eta_t' \Xi_t^{-1} v_t -\frac{1}{2} \bar\eta_t' \Xi_t^{-1} \bar\eta_t\right\} \right]  
                                        \nonumber\\
    =& \sum_{t=0}^{T-1} \left\{ \bar\gamma_t' \Lambda^ {-1}_t \mathbf{E}^{\bar\gamma, \bar\eta} \left[w_t\right]  -\frac{1}{2} \bar\gamma_t' \Lambda^ {-1}_t \bar\gamma_t\right\} + \left\{ \bar\eta_t' \Xi_t^{-1} \mathbf{E}^{\bar\gamma, \bar\eta} \left[v_t\right] -\frac{1}{2} \bar\eta_t' \Xi_t^{-1} \bar\eta_t\right\}
                                        \nonumber\\
     =& \sum_{t=0}^{T-1} \left\{ \bar\gamma_t' \Lambda^ {-1}_t\bar\gamma_t  -\frac{1}{2} \bar\gamma_t' \Lambda^ {-1}_t \bar\gamma_t\right\} + \left\{ \bar\eta_t' \Xi_t^{-1}  \bar\eta_t -\frac{1}{2} \bar\eta_t' \Xi_t^{-1} \bar\eta_t\right\}
                                        \nonumber\\
    =& \frac{1}{2} \sum_{t=0}^{T-1} \left( \bar\gamma_t' \Lambda^ {-1}_t \bar\gamma_t + \bar\eta_t' \Xi_t^{-1} \bar\eta_t\right)
                                        \nonumber
\end{align}


Under $\mathbb{P}^{\bar\gamma, \bar\eta}$, the state dynamics at \eqref{eq:state} becomes
\begin{align}\label{eq:state:Pbar:rand}
    x_{t+1} 
&= a + A x_t + B \left(\bar{u}_t + v^{\bar\eta}_t \right)+ w^{\bar\gamma}_t,
\end{align}
for $t=0, \ldots, T-1$. Furthermore, the cost function at \eqref{eq:reward:rand} can be expressed as:
\begin{align}\label{eq:reward:rand:Pbar}
    G_T 
    :=& \sum_{t=0}^{T-1} \left[ 
        x_t' M x_t  
        + \int u_t' N_t u_t \pi\left(du;\bar{u}_t,\bar\eta_t\right) 
        + \int u_t' Q x_t \pi\left(du;\bar{u}_t,\bar\eta_t\right) 
        \right.
                                \\
        &\left.
        + x_t' m  
        + \int u_t' n \pi\left(du;\bar{u}_t,\bar\eta_t\right) \right]  
        + x_T' M_T x_T + x_T' m_T
                                \nonumber\\
    =& \sum_{t=0}^{T-1} \left[ 
        x_t' M x_t  
        + \tr \left( \Xi_t N_t \right) + \left( \bar{u}_t + \bar\eta_t \right)'N_t\left( \bar{u}_t + \bar\eta_t \right) 
        \right.
                                \nonumber\\
    & \left.
        + \left( \bar{u}_t + \bar\eta_t \right)' Q x_t
        + x_t' m  
        + \left( \bar{u}_t + \bar\eta_t \right)' n 
        \right]
        + x_T' M_T x_T + x_T' m_T.
                                \nonumber
\end{align}

 On the basis of the \emph{energy-entropy duality}, we then write 
\begin{align}\label{eq:criterion:I:Pbar:step1}
    \ln I(\bar{u};T,\theta) 
    = \sup_{\bar\gamma,\bar\eta}
    \mathbf{E}^{\bar\gamma, \bar\eta} \left[ 
        \theta G_T 
        -\frac{1}{2} \sum_{t=0}^{T-1} \left( \bar\gamma_t' \Lambda^ {-1}_t \bar\gamma_t + \bar\eta_t' \Xi_t^{-1} \bar\eta_t\right)
    \right],
\end{align}
where $I$ is the criterion defined at \eqref{eq:criterion:I}.

Taking the infimum over $\bar{u}$ to minimize the criterion $I$, we have
\begin{align}\label{eq:EEDuality:inf}
    & \inf_{\bar{u}}\ln I(\bar{u};T,\theta) 
    = \inf_{\bar{u}} \sup_{\bar\gamma,\bar\eta}
    \mathbf{E}^{\bar\gamma, \bar\eta} \left[ 
        \theta G_T 
        -\frac{1}{2} \sum_{t=0}^{T-1} \left( \bar\gamma_t' \Lambda^ {-1}_t \bar\gamma_t + \bar\eta_t' \Xi_t^{-1} \bar\eta_t\right)
    \right]
                            \nonumber\\
    \Leftrightarrow&
    \ln \inf_{\bar{u}} I(\bar{u};T,\theta) 
    = \inf_{\bar{u}} \sup_{\bar\gamma,\bar\eta}
    \mathbf{E}^{\bar\gamma, \bar\eta} \left[ 
        \theta G_T 
        -\frac{1}{2} \sum_{t=0}^{T-1} \left( \bar\gamma_t' \Lambda^ {-1}_t \bar\gamma_t + \bar\eta_t' \Xi_t^{-1} \bar\eta_t\right)
    \right]
                            \nonumber\\
    \Leftrightarrow&
    \inf_{\bar{u}} I(\bar{u};T,\theta) 
    = 
    \exp\left\{ \inf_{\bar{u}} \sup_{\bar\gamma,\bar\eta}
    \mathbf{E}^{\bar\gamma, \bar\eta} \left[ 
        \theta G_T 
        -\frac{1}{2} \sum_{t=0}^{T-1} \left( \bar\gamma_t' \Lambda^ {-1}_t \bar\gamma_t + \bar\eta_t' \Xi_t^{-1} \bar\eta_t\right)
    \right] \right\},
\end{align}
where the first equivalence follows from Lemma 5.3.1 in \citet{Meneghini1994}.

Focusing on the term inside the exponential on the right-hand side of \eqref{eq:EEDuality:inf}, we consider the game with optimal value  
\begin{align}\label{eq:criterion:I:Pbar}
    V(T;\theta) 
    := \inf_{\bar{u}}\sup_{\bar\gamma,\bar\eta}
    \mathbf{E}^{\bar\gamma, \bar\eta} \left[ 
        \theta G_T 
        -\frac{1}{2} \sum_{t=0}^{T-1} \left( \bar\gamma_t' \Lambda^ {-1}_t \bar\gamma_t + \bar\eta_t' \Xi_t^{-1} \bar\eta_t\right)
    \right]
\end{align}
so that
\begin{align}\label{criterion1}
 \inf_{\bar{u}}I(\bar{u};T,\theta)=\exp\left\{V(T;\theta)\right\}
 \end{align}

\begin{remark}
We can thus interpret the energy-entropy duality's $\inf\sup$ as a two-player game against Nature. The agent applies control $u$ to minimize the expectation while Nature (via the duality) applies control $\bar\nu := \begin{pmatrix} \bar{\gamma}' &\bar{\eta}'\end{pmatrix}'$ to maximize it.  Notice that we subtract the entropy from the criterion $I(u;T,\theta)$ to be minimized in $\bar u$; on the other hand, we maximize over the entropy parameters $\bar\gamma$ and $\bar\eta$.  In \citet{wangReinforcementLearningContinuous2020}, the authors simply add the randomized control entropy to the criterion to be minimized with respect to the originally given control as a reasonable way to proceed but without further explanation. Their entropy is not parametrized and therefore their problem does not become a stochastic differential game. By contrast, our setup results from the energy-entropy duality, justifying the presence of an additive entropy penalization.
\end{remark}

\subsection{Solving the Risk-Neutral Penalized Game Problem}

To solve the stochastic game with optimal value $ V(T;\theta)$ as in \eqref{eq:criterion:I:Pbar} we shall apply the Dynamic Programming Principle (DPP). To this effect, let $V_t(x_t)$ be the optimal value of the game at the generic time $t\quad(t=0,\cdots,T)$ when the controlled state takes the value $x_t$. The risk sensitivity parameter $\theta$ is supposed to be fixed. Using \eqref{eq:criterion:I:Pbar} that expresses the global optimal value of the game as well as \eqref{eq:reward:rand:Pbar} that expresses the cost $G_T$, the DPP lets us express $V_t$ recursively as
\begin{align}\label{eq:DPP:TC}
 V_T(x_T)=& \theta \,\left[x_T'M_Tx_T+x_T'm_T\right]
\end{align}
and, for $t=T-1,\cdots,1,0$, 
\begin{align}\label{eq:DPP:DPP} 
V_t(x_t)
    =&\inf_{\bar{u}}\sup_{\bar\gamma,\bar\eta} \mathbf{E}_{t,x_t}^{\bar\gamma, \bar\eta} \left\{
    \theta\left[ 
        x_t' M x_t  
        + \tr \left( \Xi_t N_t \right) + \left( \bar{u} + \bar\eta \right)'N_t\left( \bar{u} + \bar\eta \right)
    \right.\right.
                                     \\    
   & \left.\left. 
        + \left( \bar{u} + \bar\eta \right)' Q x_t 
        + x_t' m 
        + \left( \bar{u} + \bar\eta \right)' n \right]
    \right.
                                     \nonumber\\    
   & \left.
        -\frac{1}{2} \left( \bar\gamma' \Lambda^ {-1}_t \bar\gamma + \bar\eta' \Xi_t^{-1} \bar\eta\right)
        + V_{t+1}(x_{t+1}) \right\}
                                     \nonumber\\ 
    =& \inf_{\bar{u}}\sup_{\bar\gamma,\bar\eta} \left\{
    \theta \left[ 
        x_t' M x_t  
        + \tr \left( \Xi_t N_t \right) + \left( \bar{u} + \bar\eta \right)'N_t\left( \bar{u} + \bar\eta \right)
    \right.\right.
                                     \nonumber\\    
   & \left.\left. 
        + \left( \bar{u} + \bar\eta \right)' Q x_t 
        + x_t' m  
        + \left( \bar{u} + \bar\eta \right)' n \right]
    \right.
                                     \nonumber\\    
    &\left.
        -\frac{1}{2} \left( \bar\gamma' \Lambda^ {-1}_t \bar\gamma + \bar\eta' \Xi_t^{-1} \bar\eta\right)
        + \mathbf{E}_{t,x_t}^{\bar\gamma, \bar\eta} \left[V_{t+1}(x_{t+1}) \right]\right\}, 
                                    \nonumber
\end{align}
where $\mathbf{E}_{t,x_t}^{\bar\gamma, \bar\eta}$ denotes the expectation with respect to the measure $\mathbb{P}^{\bar\gamma, \bar\eta}$ given a generic time $t$ and a state process value $x_{t}$ at time $t$, and where we have used the fact that  $\Lambda_t $ and $\Xi_t$ are covariance matrices and therefore, $\Lambda_t  >0, \Xi_t >0$.

In what follows, we shall show that $V_t(x_t)$ has a quadratic expression in $x_t$  of the form of 
\begin{align}\label{eq:V:quadform0}
    V_t(x_t) = \frac{1}{2}x_t' P_t x_t + x_t' p_t + r_t,
\end{align}
and at the same time, we shall derive the expressions of a stationary point $(u_t^*,\gamma_t^*,\eta_t^*)$ as a candidate for the optimal control.  In the following subsection \ref{sec:saddleconditions},  we shall then present sufficient conditions for the stationary point to be a saddle point and thus to lead to optimal controls. 

Our main result in Theorem \ref{theo:main:recursions} below will be preceded by two propositions that, in turn, will be preceded by a lemma of independent interest. Theorem \ref{theo:main:recursions} will then follow as their corollary.

First we have
\begin{lemma}\label{L1}

Assuming that, at the generic time $t$, the optimal value $V_t(x_t)$ has a quadratic expression as in \eqref{eq:V:quadform0}, for a control triple $(\bar u,\bar\gamma,\bar\eta)$ we have 
\begin{align}\label{eq:V:quadform:xt}
    &\mathbf{E}_{t,x_t}^{\bar\gamma, \bar\eta} \left[V_{t+1}(x_{t+1})\right]
    \\
    =& 
        \frac{1}{2} \left(a + A x_t + B \bar{u}_t\right)' P_{t+1} 
        \left(a + A x_t + B \bar{u}_t \right)
        + \frac{1}{2} \bar\gamma_t' P_{t+1} \bar\gamma_t
                                        \nonumber\\
    &   + \frac{1}{2} \bar\eta_t'B'P_{t+1} B \bar\eta_t
        + \bar\eta_t' B' P_{t+1} \bar\gamma_t
        +\left(a + A x_t + B\bar{u}_t\right)' p_{t+1}
        + \bar\gamma_t' p_{t+1} 
                                        \nonumber\\
    &   + \bar\eta_t'B'p_{t+1}
        + \left(a + A x_t + B \bar{u}_t\right)'P_{t+1} \bar\gamma_t
        + \left(a + A x_t + B \bar{u}_t \right)' 
        P_{t+1} B \bar\eta_t
                                        \nonumber\\
    &+ \frac{1}{2} \tr\left( B'\Xi_t B P_{t+1} \right)
        + \frac{1}{2} \tr\left( \Lambda_t  P_{t+1} \right)
        + r_{t+1}.
                                        \nonumber
\end{align}

\end{lemma}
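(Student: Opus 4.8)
The plan is to reduce the whole statement to a single Gaussian second‑moment computation. First I would record that, under $\mathbb{P}^{\bar\gamma,\bar\eta}$ and conditionally on the value $x_t$ (which, in the Dynamic Programming formulation, is treated as a given constant), the dynamics \eqref{eq:state:Pbar:rand} make $x_{t+1}$ Gaussian with conditional mean
\begin{align}
\mu_{t+1} := a + A x_t + B\bar{u}_t + B\bar\eta_t + \bar\gamma_t
\end{align}
and, because $v^{\bar\eta}_t$ and $w^{\bar\gamma}_t$ are independent with covariances $\Xi_t$ and $\Lambda_t$, conditional covariance $\Sigma_{t+1} := B\,\Xi_t\,B' + \Lambda_t$, with no cross term.

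Next I would invoke the elementary identity that, for a random vector $Y$ with mean $\mu$ and covariance $\Sigma$ and any matrix $P$,
\begin{align}
\mathbf{E}\!\left[\tfrac{1}{2} Y'PY + Y'p + r\right] = \tfrac{1}{2}\mu'P\mu + \tfrac{1}{2}\tr(P\Sigma) + \mu'p + r,
\end{align}
which follows by writing $Y=\mu+\zeta$, expanding, and using $\mathbf{E}[\zeta]=0$ together with $\mathbf{E}[\zeta'P\zeta]=\tr\!\left(P\,\mathbf{E}[\zeta\zeta']\right)=\tr(P\Sigma)$. Applying it to $V_{t+1}$ in the quadratic form \eqref{eq:V:quadform0}, with $P=P_{t+1}$, $p=p_{t+1}$, $r=r_{t+1}$, $\mu=\mu_{t+1}$ and $\Sigma=\Sigma_{t+1}$, gives
\begin{align}
\mathbf{E}_{t,x_t}^{\bar\gamma,\bar\eta}\!\left[V_{t+1}(x_{t+1})\right] = \tfrac{1}{2}\mu_{t+1}'P_{t+1}\mu_{t+1} + \mu_{t+1}'p_{t+1} + \tfrac{1}{2}\tr\!\left(P_{t+1}\Sigma_{t+1}\right) + r_{t+1}.
\end{align}

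Finally I would expand these three terms by multilinearity. Setting $z := a + A x_t + B\bar{u}_t$ so that $\mu_{t+1} = z + B\bar\eta_t + \bar\gamma_t$, expanding $\mu_{t+1}'P_{t+1}\mu_{t+1}$ produces the three ``square'' contributions $z'P_{t+1}z$, $\bar\eta_t'B'P_{t+1}B\bar\eta_t$, $\bar\gamma_t'P_{t+1}\bar\gamma_t$ (each keeping the factor $\tfrac12$) and three cross contributions, where the symmetry of $P_{t+1}$ (which may be assumed without loss of generality in \eqref{eq:V:quadform0}) merges each symmetric pair and removes the $\tfrac12$, yielding $z'P_{t+1}B\bar\eta_t$, $z'P_{t+1}\bar\gamma_t$, $\bar\eta_t'B'P_{t+1}\bar\gamma_t$; expanding $\mu_{t+1}'p_{t+1}$ gives $z'p_{t+1} + \bar\eta_t'B'p_{t+1} + \bar\gamma_t'p_{t+1}$; and $\tfrac12\tr(P_{t+1}\Sigma_{t+1}) = \tfrac12\tr(P_{t+1}B\Xi_t B') + \tfrac12\tr(\Lambda_t P_{t+1})$, with the first trace rewritten via the cyclic property of the trace. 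Substituting $z = a+Ax_t+B\bar u_t$ back in reproduces \eqref{eq:V:quadform:xt} term by term. I do not expect any genuine obstacle: the only real content is the Gaussian moment identity above, and the one thing that needs care is the bookkeeping — tracking all the bilinear cross terms of the mean square, using the symmetry of $P_{t+1}$ so that the cross‑term coefficients come out as stated, and applying the cyclic trace identity correctly to the noise‑covariance term.
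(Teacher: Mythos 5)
Your proof is correct and follows essentially the same route as the paper: both expand the quadratic form $V_{t+1}$ along the dynamics \eqref{eq:state:Pbar:rand} and evaluate the Gaussian conditional moments, the only difference being that you package the computation through the general identity $\mathbf{E}[\tfrac12 Y'PY+Y'p+r]=\tfrac12\mu'P\mu+\tfrac12\tr(P\Sigma)+\mu'p+r$ while the paper takes the expectation of each cross term separately. Your covariance term comes out as $\tfrac12\tr(P_{t+1}B\Xi_tB')$, which is the dimensionally consistent form of the paper's $\tfrac12\tr(B'\Xi_tBP_{t+1})$ (a harmless notational slip there), so nothing is missing.
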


\begin{proof} We use the state value dynamics at \eqref{eq:state:Pbar:rand} to obtain an analytic expression for $\mathbf{E}_{t,x_t}^{\bar\gamma, \bar\eta} \left[V_{t_+1}(x_{t+1})\right]$ in terms of $x_t$. We have
\begin{align}
    \mathbf{E}_{t,x_t}^{\bar\gamma, \bar\eta} \left[V_{t+1}(x_{t+1})\right]
        =&  \mathbf{E}_{t,x_t}^{\bar\gamma, \bar\eta} \left[ 
         \frac{1}{2}\left(a + A x_t + B \left(\bar{u}_t + v^{\bar\eta}_t \right)+ w^{\bar\gamma}_t\right)' 
        P_{t+1} 
        \left(a + A x_t + B \left(\bar{u}_t + v^{\bar\eta}_t \right)+ w^{\bar\gamma}_t \right)
    \right. 
                                    \nonumber\\
    &\left.        
        + \left(a + A x_t + B \left(\bar{u}_t + v^{\bar\eta}_t \right)+ w^{\bar\gamma}_t\right)' p_{t+1} + r_{t+1}
    \right]
                                        \nonumber\\
    =&  \frac{1}{2} \mathbf{E}_{t,x_t}^{\bar\gamma, \bar\eta} \left[ 
        \left(a + A x_t + B \bar{u}_t\right)' 
        P_{t+1} 
        \left(a + A x_t + B \bar{u}_t \right) \right]
        + \frac{1}{2} \underbrace{\mathbf{E}_{t,x_t}^{\bar\gamma, \bar\eta} \left[ 
            (v^{\bar\eta}_t)'B' P_{t+1} B   v^{\bar\eta}_t
        \right]}_{
            = \tr\left( B'\Xi_tB P_{t+1} \right) 
            + \bar\eta_t'B'P_{t+1} B \bar\eta_t
            }     
                                    \nonumber\\
   &    + \frac{1}{2} \underbrace{\mathbf{E}_{t,x_t}^{\bar\gamma, \bar\eta} \left[ 
        (w^{\bar\gamma}_t)' P_{t+1} w^{\bar\gamma}_t
        \right]}_{
            = \tr\left( \Lambda_t  P_{t+1} \right)
            + \bar\gamma_t' P_{t+1} \bar\gamma_t
            }    
        + \mathbf{E}_{t,x_t}^{\bar\gamma, \bar\eta} \left[ 
        \left(a + A x_t + B \bar{u}_t \right)' 
        P_{t+1} B v^{\bar\eta}_t
        \right]
                                    \nonumber\\
    &    
        + \mathbf{E}_{t,x_t}^{\bar\gamma, \bar\eta} \left[ 
        \left(a + A x_t + B \bar{u}_t\right)' 
        P_{t+1} w^{\bar\gamma}_t 
        \right]    
        + \underbrace{\mathbf{E}_{t,x_t}^{\bar\gamma, \bar\eta} \left[ 
        (v^{\bar\eta}_t)'B' 
        P_{t+1} w^{\bar\gamma}_t
        \right]}_{= \bar\eta_t' B' P_{t+1} \bar\gamma_t}
                                    \nonumber\\
    &        
        + \left(a + A x_t + B\bar{u}_t\right)' p_{t+1} 
        + \mathbf{E}_{t,x_t}^{\bar\gamma, \bar\eta} \left[ (v^{\bar\eta}_t)'\right]B'p_{t+1}
        + \mathbf{E}_{t,x_t}^{\bar\gamma, \bar\eta} \left[ (w^{\bar\gamma}_t)'\right]p_{t+1}
        + r_{t+1}
                                    \nonumber
\end{align}
\begin{align}\label{eq:V:quadform:xt:proof}
    \mathbf{E}_{t,x_t}^{\bar\gamma, \bar\eta} \left[V_{t+1}(x_{t+1})\right]
    =&  \frac{1}{2} \left(a + A x_t + B \bar{u}_t\right)' 
        P_{t+1} 
        \left(a + A x_t + B \bar{u}_t \right)    
                                        \\
    &    \frac{1}{2} \tr\left( B'\Xi_t B P_{t+1} \right)
        + \frac{1}{2} \bar\eta_t'B'P_{t+1} B \bar\eta_t  
        + \frac{1}{2} \tr\left( \Lambda_t  P_{t+1} \right)     
                                    \nonumber\\
    &          
        + \frac{1}{2} \bar\gamma_t' P_{t+1} \bar\gamma_t  
        + \left(a + A x_t + B \bar{u}_t \right)' 
        P_{t+1} B \bar\eta_t      
                                    \nonumber\\
    &   + \left(a + A x_t + B \bar{u}_t\right)' 
        P_{t+1} \bar\gamma_t 
        + \bar\eta_t' B' P_{t+1} \bar\gamma_t  
                                    \nonumber\\
    &   + \left(a + A x_t + B\bar{u}_t\right)' p_{t+1}
        + \bar\eta_t'B'p_{t+1}
        + \bar\gamma_t' p_{t+1}
        + r_{t+1}
                                    \nonumber
\end{align}
from which the statement of the lemma follows.
\end{proof}

To ease notation, in what follows we shall use the shorthand notation, applied to the time point $t+1$ for a generic $t \in \left\{0,\ldots,T-1\right\}$.
\begin{align}\label{short}
    \mathfrak{A}^{(1)}_{t+1} &= B'P_{t+1}A + \theta Q,
    &
     \mathfrak{A}^{(2)}_{t+1} &= P_{t+1}'A \\
     \mathfrak{a}^{(1)}_{t+1}
    &= B'P_{t+1} a + B' p_{t+1} + \theta  n,
    &
    \mathfrak{a}^{(2)}_{t+1}
    &= P_{t+1}'a + p_{t+1}\nonumber\\
     \mathfrak{B}^{(1)}_{t+1} &= B' P_{t+1} B + 2\theta N_t &  \mathfrak{B}^{(2)}_{t+1}    
    &= - \Lambda^ {-1}_t + P_{t+1} 
                         \nonumber
   \end{align}

\begin{align}\label{short2}
    \mathfrak{B}^{(3)}_{t+1} 
    &= -\Xi_t^{-1} + \mathfrak{B}^{(1)}_{t+1}=-\Xi_t^{-1}+  B' P_{t+1} B + 2\theta N_t \\
 \mathfrak{C}_{t+1} &= B'P_{t+1} \nonumber\\
 \mathfrak{G}_{t+1}
    &= \mathfrak{C}_{t+1}'\left(\mathfrak{B}^{(1)}_{t+1}\right)^{-1}\mathfrak{C}_{t+1} - \mathfrak{B}^{(2)}_{t+1}
                                \nonumber
\end{align}

\begin{remark} Note that if $N_t$ is a symmetric matrix, as is the case in investment management problems, $\mathfrak{B}^{(1)}_{t+1} = B' P_{t+1} B + 2\theta N_t$ should also be symmetric because, as will be shown below,  $P_{t+1}$ 
is symmetric as the solution to a Riccati recursion. Similarly, $\mathfrak{B}^{(2)}_{t+1}$ is also symmetric. \end{remark} 

For a generic control triple $(\bar u,\bar\gamma,\bar\eta)$ and for given $t$, $x$, and a set of model parameters, in what follows, we shall also consider the function
\begin{align}\label{eq:auxfunc:F:shorthand}
    F(\bar{u}, \bar\gamma, \bar\eta)
    :=& 
        \frac{1}{2} \bar{u}' \mathfrak{B}^{(1)}_{t+1} \bar{u}
        + \bar{u}' \left(
            \mathfrak{A}^{(1)}_{t+1} x_t  
            + \mathfrak{a}^{(1)}_{t+1} 
          \right)
        + \frac{1}{2} \bar\gamma'\mathfrak{B}^{(2)}_{t+1} \bar\gamma
                                        \\
    &
        + \bar\gamma' \left( 
            \mathfrak{A}^{(2)}_{t+1} x_t
            + \mathfrak{a}^{(2)}_{t+1}
          \right)
        + \frac{1}{2} \bar\eta' \mathfrak{B}^{(3)}_{t+1}\bar\eta
        + \bar\eta' \left(
            \mathfrak{A}^{(1)}_{t+1} x_t 
            + \mathfrak{a}^{(1)}_{t+1} 
          \right)
                            \nonumber\\
    &   + \bar{u}'\mathfrak{C}_{t+1} \bar\gamma
        + \bar{u}' \mathfrak{B}^{(1)}_{t+1} \bar\eta
        + \bar\eta' \mathfrak{C}_{t+1} \bar\gamma,
                            \nonumber
\end{align}

To establish the main results, we consider the following conditions on the coefficients:

\begin{assumption}\label{as:coeffs}
For $t=0,\cdots,T-1$ assume
\begin{enumerate}[(i)]
    \item $\mathfrak{B}^{(1)}_{t+1} = B' P_{t+1} B + 2\theta N_t > 0 $;
    \item \begin{align*}- \begin{pmatrix}
		\mathfrak{B}^{(2)}_{t+1}  
		& \mathfrak{C}_{t+1}' \\ 
		\mathfrak{C}_{t+1}  
		& \mathfrak{B}^{(3)}_{t+1}    
        \end{pmatrix} 
        =- \begin{pmatrix}
		- \Lambda^ {-1}_t + P_{t+1}
		& (B'P_{t+1})' \\ 
		B'P_{t+1} 
		& -\Xi_t^{-1}+  B' P_{t+1} B + 2\theta N_t   
        \end{pmatrix} > 0.\end{align*}   
\end{enumerate}
    
\end{assumption}

We discuss these conditions in Section \ref{sec:saddleconditions}.

Using Lemma \ref{L1}, we prove two propositions on our way to establishing the main theorem.

\begin{proposition}\label{P1}
The optimal value function $V_t(x_t)$ at \eqref{eq:DPP:DPP} has, for $t=0,\cdots, T-1$, the following saddle point representation
 \begin{align}\label{eq:DPP:Vquadform1}
    V_t(x_t)
    =& \inf_{\bar{u}}\sup_{\bar\gamma,\bar\eta} \left\{ F(\bar{u}, \bar\gamma, \bar\eta)
    \right\}
        +   x_t' \left(\theta M + \frac{1}{2} A 'P_{t+1}A \right)x_t
                                        \\
    &
        + x_t'\left(
            A' P_{t+1} a + \theta m + A' p_{t+1}
          \right)
        + \frac{1}{2} \tr\left( B'\Xi_t B P_{t+1} \right)
        + \frac{1}{2} \tr\left( \Lambda_t  P_{t+1} \right)
        + r_{t+1} 
                                    \nonumber\\
    &        
        + \frac{1}{2} a'P_{t+1}a
        + a' p_{t+1}
        + \theta \tr \left( \Xi_t N_t \right).
                                        \nonumber
\end{align}

Furthermore, the candidate optimal controls are given by the stationary point  $(u_t^*,\gamma_t^*,\eta_t^*)$ of the quadratic function
$F(\bar{u}, \bar\gamma, \bar\eta)$ at \eqref{eq:auxfunc:F:shorthand} and can be explicitly expressed as

\begin{align}\label{eq:star:2} 
u_t^* &= \left(\mathfrak{B}^{(1)'}_{t+1}\mathfrak{B}^{(2)}_{t+1}- \mathfrak{C}'_{t+1} \mathfrak{C}_{t+1}\right)^{-1}
 \left[ \left(\mathfrak{A}^{(2)'}_{t+1}\mathfrak{C}_{t+1} -\mathfrak{A}^{(1)'}_{t+1}\mathfrak{B}^{(2)}_{t+1}\right)\,x_t  + \left(\mathfrak{C}'_{t+1}
    \mathfrak{a}^{(2)}_{t+1}- \mathfrak{B}^{(2)'}_{t+1} \mathfrak{a}^{(1)}_{t+1} \right)\right]
                                    \\
&=\left(B'P_{t+1}B+2\theta\,N-B'P'_{t+1}P_{t+1}B\right)^{-1}
 \left\{\left[P'_{t+1}A'BP_{t+1}-\left(B'P_{t+1} A+\theta\,Q\right)'\left(P_{t+1}-\Lambda^{-1}\right)\right]\,x_t\right.\nonumber\\
 &\left. 
 + P'_{t+1}BP_{t+1}a+P'_{t+1}Bp_{t+1}- \left(P_{t+1}-\Lambda^{-1}\right)\left(B'P_{t+1}a+B'p_{t+1}+\theta\,n\right)\right\}
 \nonumber\\ {}\nonumber\\
  \gamma_t^*&=\left(\mathfrak{B}^{(1)'}_{t+1}\mathfrak{B}^{(2)}_{t+1}- \mathfrak{C}'_{t+1} \mathfrak{C}_{t+1}\right)^{-1} 
 \left[ \left(\mathfrak{A}^{(1)'}_{t+1}\mathfrak{C}'_{t+1} -\mathfrak{A}^{(2)'}_{t+1}\mathfrak{B}^{(1)}_{t+1}\right)\,x_t  + \left(\mathfrak{C}_{t+1}' \mathfrak{a}^{(1)}_{t+1}- \mathfrak{B}^{(1)'}_{t+1} \mathfrak{a}^{(2)}_{t+1} \right)\right]\nonumber\\
 &\quad=\Bigl\{\left[\left(P_{t+1}-\Lambda^{-1}\right)\left(B'P_{t+1}B+2\,\theta\,N\right)-B'P_{t+1}P_{t+1}B\right]^{-1} \nonumber\\ 
 &\hspace{2cm}\left[B'P_{t+1}\left(A'P_{t+1}B+\theta\,Q\right)-A'P_{t+1}\left(B'P_{t+1}B+2\,\theta\,N\right)\right]\,x_t
 \nonumber \\
 &\hspace{2cm} + B'P_{t+1}\left(aP_{t+1}B+B'\,p_{t+1}+\theta\,n\right)-\left(B'P_{t+1}B+2\,\theta\,N\right)\left(aP_{t+1}+p_{t+1}\right)\Bigr\}
 \nonumber \\{}\nonumber\\
\eta_t^*&= -\left(\mathfrak{B}^{(3)}_{t+1}\right)^{-1}\,\left[ \mathfrak{A}^{(1)'}_{t+1} x_t + \mathfrak{a}^{(1)}_{t+1} + (\mathfrak{B}^{(1)'}_{t+1})u_t^*
  + \mathfrak{C}_{t+1} \gamma_t^*\right] \nonumber\\
  &  \Leftrightarrow \>\>-\Xi_t^{-1}\eta_t^*+ \mathfrak{A}^{(1)}_{t+1} x_t+ \mathfrak{a}^{(1)}_{t+1}-\left\{\mathfrak{A}^{(1)}_{t+1} x_t+ \mathfrak{a}^{(1)}_{t+1}
  + \mathfrak{C}_{t+1} \gamma_t^*\right\}+ \mathfrak{C}_{t+1} \gamma_t^* =0\nonumber\\
 &  \Leftrightarrow \>\>\eta_t^*=0
                                \nonumber
\end{align}
for $t=0,\ldots,T-1$.

\end{proposition}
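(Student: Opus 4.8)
The plan is to treat the two assertions separately: (1) the representation \eqref{eq:DPP:Vquadform1} is pure bookkeeping on the Bellman recursion \eqref{eq:DPP:DPP}, and (2) the formulas for $(u_t^*,\gamma_t^*,\eta_t^*)$ come from setting the gradient of the quadratic form $F$ to zero and reading off the solution.

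First I would substitute the closed-form expression for $\mathbf{E}_{t,x_t}^{\bar\gamma,\bar\eta}[V_{t+1}(x_{t+1})]$ from Lemma \ref{L1} into the right-hand side of \eqref{eq:DPP:DPP}, expand $(a+Ax_t+B\bar u)'P_{t+1}(a+Ax_t+B\bar u)$ and $(a+Ax_t+B\bar u)'p_{t+1}$, and then sort the monomials into three piles: those involving at least one of $\bar u,\bar\gamma,\bar\eta$; those in $x_t$ only; and pure constants (the trace terms, $r_{t+1}$, and $\theta\tr(\Xi_t N_t)$). Term by term — using symmetry of $N_t$ and of $P_{t+1}$, the latter being part of the inductive structure carried along with the Riccati recursion established later — the first pile should reassemble into $F(\bar u,\bar\gamma,\bar\eta)$ exactly as written in \eqref{eq:auxfunc:F:shorthand}; for instance the $\bar u$-quadratic collapses to $\tfrac12\bar u'(B'P_{t+1}B+2\theta N_t)\bar u=\tfrac12\bar u'\mathfrak{B}^{(1)}_{t+1}\bar u$, the $\bar\gamma$-quadratic to $\tfrac12\bar\gamma'(-\Lambda_t^{-1}+P_{t+1})\bar\gamma=\tfrac12\bar\gamma'\mathfrak{B}^{(2)}_{t+1}\bar\gamma$, the $\bar u$-$\bar\eta$ cross term to $\bar u'\mathfrak{B}^{(1)}_{t+1}\bar\eta$, and the affine parts to $\bar u'(\mathfrak{A}^{(1)}_{t+1}x_t+\mathfrak{a}^{(1)}_{t+1})$, $\bar\gamma'(\mathfrak{A}^{(2)}_{t+1}x_t+\mathfrak{a}^{(2)}_{t+1})$, $\bar\eta'(\mathfrak{A}^{(1)}_{t+1}x_t+\mathfrak{a}^{(1)}_{t+1})$. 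The other two piles do not depend on the controls, so they factor out of $\inf_{\bar u}\sup_{\bar\gamma,\bar\eta}$, and reading them off gives precisely the remaining summands of \eqref{eq:DPP:Vquadform1}.

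Next I would compute the stationary point of $F$ from the first-order conditions $\nabla_{\bar u}F=\nabla_{\bar\gamma}F=\nabla_{\bar\eta}F=0$, which, using that $\mathfrak{B}^{(1)}_{t+1},\mathfrak{B}^{(2)}_{t+1},\mathfrak{B}^{(3)}_{t+1}$ are symmetric, read
\begin{align*}
\mathfrak{B}^{(1)}_{t+1}\bar u+\mathfrak{C}_{t+1}\bar\gamma+\mathfrak{B}^{(1)}_{t+1}\bar\eta &=-\bigl(\mathfrak{A}^{(1)}_{t+1}x_t+\mathfrak{a}^{(1)}_{t+1}\bigr),\\
\mathfrak{C}_{t+1}'\bar u+\mathfrak{B}^{(2)}_{t+1}\bar\gamma+\mathfrak{C}_{t+1}'\bar\eta &=-\bigl(\mathfrak{A}^{(2)}_{t+1}x_t+\mathfrak{a}^{(2)}_{t+1}\bigr),\\
\mathfrak{B}^{(1)}_{t+1}\bar u+\mathfrak{C}_{t+1}\bar\gamma+\mathfrak{B}^{(3)}_{t+1}\bar\eta &=-\bigl(\mathfrak{A}^{(1)}_{t+1}x_t+\mathfrak{a}^{(1)}_{t+1}\bigr).
\end{align*}
Subtracting the first equation from the third gives $(\mathfrak{B}^{(3)}_{t+1}-\mathfrak{B}^{(1)}_{t+1})\bar\eta=-\Xi_t^{-1}\bar\eta=0$, whence $\eta_t^*=0$ because $\Xi_t$ is an invertible covariance matrix; this is exactly the chain of equivalences displayed for $\eta_t^*$ in \eqref{eq:star:2}. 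Setting $\eta_t^*=0$, the first two conditions reduce to the block system $\mathfrak{B}^{(1)}_{t+1}u_t^*+\mathfrak{C}_{t+1}\gamma_t^*=-(\mathfrak{A}^{(1)}_{t+1}x_t+\mathfrak{a}^{(1)}_{t+1})$ and $\mathfrak{C}_{t+1}'u_t^*+\mathfrak{B}^{(2)}_{t+1}\gamma_t^*=-(\mathfrak{A}^{(2)}_{t+1}x_t+\mathfrak{a}^{(2)}_{t+1})$, which I would solve by eliminating one variable, invoking the non-singularity of $\mathfrak{B}^{(1)}_{t+1}$ from Assumption \ref{as:coeffs}(i) and of the Schur complement $\mathfrak{G}_{t+1}$ defined in \eqref{short2}; rearranging produces the closed forms for $u_t^*$ and $\gamma_t^*$ in \eqref{eq:star:2}. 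Under Assumption \ref{as:coeffs}, $F$ is strictly convex in $\bar u$ and strictly concave in $(\bar\gamma,\bar\eta)$, so this stationary point is unique and is a saddle, but since the complete saddle-point verification is deferred to Section \ref{sec:saddleconditions} we record $(u_t^*,\gamma_t^*,\eta_t^*)$ only as a candidate here.

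The difficulty is clerical rather than conceptual: Step 1 demands careful tracking of transposes and a record of exactly where symmetry of $P_{t+1}$ and $N_t$ is used, and Step 2 is linear algebra whose only delicate moment is the cancellation $(\mathfrak{B}^{(3)}_{t+1}-\mathfrak{B}^{(1)}_{t+1})\bar\eta=-\Xi_t^{-1}\bar\eta$ that forces the exploration drift to vanish. The matrix inversions appearing in \eqref{eq:star:2} rest on the positivity hypotheses of Assumption \ref{as:coeffs}, whose role is discussed in Section \ref{sec:saddleconditions}.
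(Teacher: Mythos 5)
Your proposal is correct and follows essentially the same route as the paper: substitute Lemma \ref{L1} into \eqref{eq:DPP:DPP} and regroup the control-dependent terms into $F$, then derive the stationary point from the three first-order conditions, with $\eta_t^*=0$ falling out of the cancellation $(\mathfrak{B}^{(3)}_{t+1}-\mathfrak{B}^{(1)}_{t+1})\bar\eta=-\Xi_t^{-1}\bar\eta=0$ (your subtraction of the first FOC from the third is the same algebra as the paper's substitution of the $\bar u$-condition into the $\bar\eta$-condition), followed by elimination in the remaining $2\times 2$ block system. The only cosmetic difference is that you omit the paper's intermediate Hamiltonian/minimax-interchange discussion, which is harmless here since the proposition only records the stationary point as a candidate and defers the saddle-point verification to Section \ref{sec:saddleconditions}.
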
   
   
\begin{proof}
       
  Substituting \eqref{eq:V:quadform:xt} into the dynamic programming equation at \eqref{eq:DPP:DPP}, we obtain    
\begin{align}\label{eq:DPP:Vquadform2}
    & V_t(x_t)
                                        \nonumber\\ 
    =& \inf_{\bar{u}}\sup_{\bar\gamma,\bar\eta} \left\{
          \theta x_t' M x_t  
        + \theta \tr \left( \Xi_t N_t \right) 
        + \theta \bar{u}'N_t\bar{u}
        + 2\theta \bar{u}'N_t\bar\eta
        + \theta \bar\eta' N_t \bar\eta
        + \theta \bar{u}' Q x_t 
        + \theta \bar\eta' Q x_t
        + \theta \bar{u}' n
    \right.
                                        \nonumber\\    
    &\left.
        + \theta \bar\eta' n
        + \theta x_t' m
        -\frac{1}{2} \bar\gamma' \Lambda^ {-1}_t\bar\gamma
        -\frac{1}{2} \bar\eta' \Xi_t^{-1} \bar\eta
        + \frac{1}{2} \left(a + A x_t + B \bar{u}\right)' P_{t+1} 
        \left(a + A x_t + B \bar{u} \right)
    \right.
                                        \nonumber\\    
    &\left.
        + \frac{1}{2} \bar\gamma' P_{t+1} \bar\gamma
        + \frac{1}{2} \bar\eta'B'P_{t+1} B \bar\eta  
        + \left(a + A x_t + B\bar{u}\right)' p_{t+1}
        + \bar\gamma' p_{t+1} 
        + \bar\eta'B'p_{t+1}
        \right.
                                        \nonumber\\
    & \left. 
        + \left(a + A x_t + B \bar{u}\right)'P_{t+1} \bar\gamma
        + \left(a + A x_t + B \bar{u} \right)' 
        P_{t+1} B \bar\eta
        + \bar\eta_t' B' P_{t+1} \bar\gamma_t
        \right.
                                        \nonumber\\
    & \left.   
        + \frac{1}{2} \tr\left( B'\Xi_t B P_{t+1} \right)
        + \frac{1}{2} \tr\left( \Lambda_t  P_{t+1} \right)
        + r_{t+1} 
    \right\}
                                        \nonumber\\
    =& \inf_{\bar{u}}\sup_{\bar\gamma,\bar\eta} \left\{
          \theta x_t' M x_t  
        + \theta \tr \left( \Xi_t N_t \right) 
        + \theta \bar{u}'N_t\bar{u}
        + 2\theta \bar{u}'N_t\bar\eta
        + \theta \bar{u}' Q x_t 
        + \theta \bar\eta' Q x_t
        + \theta \bar{u}' n
        + \theta \bar\eta' n
    \right.
                                        \nonumber\\    
    &\left.
        + \theta x_t' m
        -\frac{1}{2} \bar\gamma' \left( \Lambda^ {-1}_t - P_{t+1} \right)\bar\gamma
        -\frac{1}{2} \bar\eta' \left( \Xi_t^{-1} - B'P_{t+1} B - 2 \theta N_t\right)\bar\eta
    \right.
                                        \nonumber\\
    & \left. 
        + \frac{1}{2} \bar{u}' B' P_{t+1} B \bar{u}
        +  \bar{u}' B' P_{t+1} \left(a + A x_t\right)
        + \frac{1}{2} a'P_{t+1}a
        + x_t'A' P_{t+1} a
        + \frac{1}{2} x_t'A 'P_{t+1}A x_t
    \right.
                                        \nonumber\\
    & \left.     
        + \left(a + A x_t \right)' p_{t+1}
        + \bar{u}'B' p_{t+1}
        + \bar\gamma' p_{t+1} 
        + \bar\eta'B'p_{t+1}
        + \left(a + A x_t\right)'P_{t+1} \bar\gamma_t
        + \bar{u}'B'P_{t+1} \bar\gamma_t
        \right.
                                        \nonumber\\
    & \left.   
        + \left(a + A x_t\right)' P_{t+1} B \bar\eta
        + \bar{u}' B' P_{t+1} B \bar\eta
        + \bar\eta_t' B' P_{t+1} \bar\gamma_t
        + \frac{1}{2} \tr\left( B'\Xi_t B P_{t+1} \right)
        + \frac{1}{2} \tr\left( \Lambda_t  P_{t+1} \right)
        + r_{t+1} 
    \right\} 
                                        \nonumber\\
    =& \inf_{\bar{u}}\sup_{\bar\gamma,\bar\eta} \left\{
            x_t' \left(\theta M + \frac{1}{2} A 'P_{t+1}A \right)x_t
        + x_t'\left(
            A' P_{t+1} a + \theta m + A' p_{t+1}
          \right)
    \right.
                                        \nonumber\\
    &\left.
        + \bar{u}' \left( \theta N_t + \frac{1}{2} B' P_{t+1} B \right)\bar{u}
        + \bar{u}' \left[
            \left( B'P_{t+1}A + \theta Q \right)x_t  
            + B'P_{t+1} a
            + B' p_{t+1}
            + \theta  n
          \right]
    \right.
                                        \nonumber\\
    &\left.
        -\frac{1}{2} \bar\gamma' \left( \Lambda^ {-1}_t - P_{t+1} \right)\bar\gamma
        + \bar\gamma' \left[ 
            P_{t+1}'A x_t
            + P_{t+1}'a
            + p_{t+1}
          \right]
    \right.
                                        \nonumber\\
    &\left.
        -\frac{1}{2} \bar\eta' \left( 
            \Xi_t^{-1} - B'P_{t+1} B - 2\theta N_t
          \right)\bar\eta
        + \bar\eta' \left[
            \left(B'P_{t+1}'A + \theta Q\right)x_t 
            + B'p_{t+1} 
            + B'P_{t+1}'a
            + \theta  n 
          \right]
    \right.
                                        \nonumber\\
    &\left.
        + \bar{u}'B'P_{t+1} \bar\gamma_t
        + \bar{u}'\left[ 
            B' P_{t+1} B 
            + 2\theta N_t
          \right] \bar\eta
        + \bar\eta_t' B' P_{t+1} \bar\gamma_t
    \right.
                                        \nonumber\\
    &\left.
        + \frac{1}{2} \tr\left( B'\Xi_t B P_{t+1} \right)
        + \frac{1}{2} \tr\left( \Lambda_t  P_{t+1} \right)
        + r_{t+1} 
        + \frac{1}{2} a'P_{t+1}a
        + a' p_{t+1}
        + \theta \tr \left( \Xi_t N_t \right)
    \right\}  
    \end{align}       
from which, by the definition of the function $F(\bar u,\bar\gamma,\bar\eta)$ in \eqref{eq:auxfunc:F:shorthand} the first part of the proposition follows.

On the basis of  \eqref{eq:DPP:Vquadform2},  we also define the Hamiltonian $\mathcal{H}$
\begin{align}\label{eq:Hamiltonian}
    & \mathcal{H}(t,x_t,\bar{u}_t,\bar\gamma_t,\bar\eta_t) 
                        \\
    :=& \theta \left[ 
        x_t' M x_t  
        + \text{tr} \left( \Xi N_t \right) + \left( \bar{u} + \bar\eta \right)'N_t\left( \bar{u} + \bar\eta \right)
        + \left( \bar{u} + \bar\eta \right)' Q x_t 
        \right.
                        \nonumber\\
    & \left.
            + x_t' m
            + \left( \bar{u} + \bar\eta \right)' n \right]
        -\frac{1}{2} \left( \bar\gamma' \Lambda^ {-1}_t \bar\gamma + \bar\eta' \Xi_t^{-1} \bar\eta\right)
    + \mathbf{E}_{t,x_t}^{\bar\gamma, \bar\eta} \left[V_{t+1}(x_{t+1}) \right]
                        \nonumber
\end{align}
where $\mathbf{E}_{t,x_t}^{\bar\gamma, \bar\eta}$ denotes the expectation with respect to the measure $\mathbb{P}^{\bar\gamma, \bar\eta}$ given a generic time $t$ and a state process value $x_{t}$ at time $t$.

The function $F$ is quadratic in the controls $\bar{u}$, $\bar\gamma$, and $\bar\eta$. Hence, it admits a unique stationary point $(u^*, \gamma^*, \eta^*)$, which is affine in the state $x_t$. Moreover, the Hessian is independent of the value of the controls.  Consequently, we can apply the first-order condition to $F$ in any ordering of the controls without affecting the outcome. Hence, the minimax condition
\begin{align}\label{eq:DPP:Vquadform3}
    \inf_{\bar{u}}\sup_{\bar\gamma,\bar\eta}
    \mathcal{H}(t,x_t,\bar{u}_t,\bar\gamma_t,\bar\eta_t)
    =
    \sup_{\bar\gamma,\bar\eta}\inf_{\bar{u}}
    \mathcal{H}(t,x_t,\bar{u}_t,\bar\gamma_t,\bar\eta_t)
\end{align}
holds in \eqref{eq:DPP:Vquadform2}. Therefore, we can apply the saddle point condition
\begin{align}
    \mathcal{H}(t,x_t,u^*_t,\bar\gamma_t,\bar\eta_t)
\leq \mathcal{H}(t,x_t,u^*_t,\gamma^*_t,\eta^*_t)
\leq \mathcal{H}(t,x_t,\bar{u}_t,\gamma^*_t,\eta^*_t)
\end{align} 
to solve the game.

Applying the definition of the Hamiltonian at \eqref{eq:Hamiltonian}, the saddle point condition implies that
\begin{align}\label{Ham}
    & \theta \left[ 
        x_t' M x_t  
        + \text{tr} \left( \Xi_t N_t \right) + \left( u^*_t + \bar\eta_t \right)'N_t\left( u^*_t + \bar\eta_t \right)
        + \left( u^*_t + \bar\eta_t \right)' Q x_t 
        + x_t' m
    + \left( u^*_t + \bar\eta_t \right)' n \right]
                        \\
    &
    -\frac{1}{2} \left( \bar\gamma_t' \Lambda^{-1} \bar\gamma_t + \bar\eta_t' \Xi_t^{-1} \bar\eta_t \right)
    + \mathbf{E}_{t,x_t}^{\bar\gamma_t, \bar\eta_t} \left[V_{t+1}(x_{t+1}) \right]                        \nonumber\\
    \leq& \theta \left[ 
        x_t' M x_t  
        + \text{tr} \left( \Xi_t N_t \right) + \left( u^*_t + \eta^*_t \right)'N_t\left( u^*_t + \eta^*_t \right)
        + \left( u^*_t + \eta^*_t \right)' Q x_t 
        + x_t' m
        + \left( u^*_t + \eta^*_t \right)' n \right]
                        \nonumber\\
    &
    -\frac{1}{2} \left( (\gamma^*_t)' \Lambda^{-1} \gamma^*_t + (\eta^*_t)' \Xi_t^{-1} \eta^*_t\right)
    + \mathbf{E}_{t,x_t}^{\gamma^*, \eta^* } \left[V_{t+1}(x_{t+1}) \right]
                        \nonumber\\
    \leq& \theta \left[ 
        x_t' M x_t  
        + \text{tr} \left( \Xi_t N_t \right) + \left( \bar{u}_t + \eta^*_t \right)'N_t\left( \bar{u}_t + \eta^*_t \right)
        + \left( \bar{u}_t +\eta^*_t \right)' Q x_t 
        + x_t' m
+ \left( \bar{u}_t + \eta^*_t \right)' n \right]
                        \nonumber\\
    &
    -\frac{1}{2} \left( (\gamma^*_t)' \Lambda^{-1} \gamma^*_t + (\eta^*)' \Xi_t^{-1} \eta^*_t \right)
    + \mathbf{E}_{t,x_t}^{\gamma^*_t, \eta^*_t} \left[V_{t+1}(x_{t+1}) \right].
                        \nonumber
\end{align}

Using again \eqref{eq:V:quadform:xt}  as well as the definition of the function $F(\cdot)$ at \eqref{eq:auxfunc:F:shorthand} together with that of the shorthand notations \eqref{short} and \eqref{short2}, we find that the previous relation \eqref{Ham} simplifies to
\begin{align} \label{eq:saddle:1}
     F(u^*_t,\bar\gamma_t,\bar\eta_t) 
     \leq
     F(u^*_t,\gamma^*_t,\eta^*_t) 
     \leq 
     F(\bar{u}_t,\gamma^*_t,\eta^*_t),   
\end{align}
where $F$ is defined above at \eqref{eq:auxfunc:F:shorthand}. Hence, the search for a saddle point for $V_t(x_t)$ reduces to the search for a saddle point for $F(\bar u,\bar\gamma,\bar\eta)$.  

The next step is to identify the unique stationary point $(u^*, \gamma^*, \eta^*)$ of the quadratic function $F(\bar u_t,\bar\gamma,\bar\eta)$. By the first order condition, $u^*$ satisfies
\begin{align}\label{eq:ustar:1}
    &\frac{\partial F}{\partial \bar{u}}(\bar{u}, \bar\gamma, \bar\eta)\Big|_{\bar{u} = u^*}
    = 0                                                                             \nonumber\\
    \Leftrightarrow& = \mathfrak{B}^{(1)}_{t+1} u^*
        + \mathfrak{A}^{(1)}_{t+1} x_t  
        + \mathfrak{a}^{(1)}_{t+1} 
        + \mathfrak{C}_{t+1} \bar\gamma
        + \mathfrak{B}^{(1)}_{t+1} \bar\eta
        = 0
                                    \nonumber\\
\Leftrightarrow& 
    u^*
    = -\left(\mathfrak{B}^{(1)}_{t+1}\right)^{-1}
    \left[ 
        \mathfrak{A}^{(1)}_{t+1} x_t  
        + \mathfrak{a}^{(1)}_{t+1} 
        + \mathfrak{C}_{t+1} \bar\gamma
    \right]
    - \bar\eta
\end{align}

Applying again the first order condition,  $\gamma^*$ satisfies

\begin{align}\label{eq:gammastar:1}
    &\frac{\partial F}{\partial \bar \gamma}(\bar{u}, \bar\gamma, \bar\eta)\Big|_{\bar\gamma = \gamma^*}
        = \mathfrak{B}^{(2)}_{t+1} \bar\gamma
        + \mathfrak{A}^{(2)}_{t+1} x_t
        + \mathfrak{a}^{(2)}_{t+1}
        + \mathfrak{C}_{t+1}' (\bar{u} +\bar\eta)   
        = 0
                                    \nonumber\\
\Leftrightarrow& 
    \gamma^* =  -\left( \mathfrak{B}^{(2)}_{t+1} \right)^{-1} \left[
            \mathfrak{A}^{(2)}_{t+1} x_t
            + \mathfrak{a}^{(2)}_{t+1}
        + \mathfrak{C}_{t+1}' (\bar{u} +\bar\eta)
        \right],
\end{align} 
and $\eta^*$ satisfies
\begin{align}\label{eq:etastar:1}
    &\frac{\partial F}{\partial \bar\eta}(\bar{u}, \bar\gamma, \bar\eta)\Big|_{\bar\eta = \eta^*}
    = 0
                                    \nonumber\\
\Leftrightarrow&     
        \mathfrak{B}^{(3)}_{t+1}\eta^*
        + \mathfrak{A}^{(1)}_{t+1} x_t 
        + \mathfrak{a}^{(1)}_{t+1} 
        + (\mathfrak{B}^{(1)}_{t+1})' \bar{u}
        + \mathfrak{C}_{t+1} \bar\gamma   
        = 0
                                    \nonumber\\
\Leftrightarrow& 
    \eta^* = - \left(\mathfrak{B}^{(3)}_{t+1}\right)^{-1} \left[
        \mathfrak{A}^{(1)}_{t+1} x_t 
        + \mathfrak{a}^{(1)}_{t+1} 
        + (\mathfrak{B}^{(1)}_{t+1})' \bar{u}
        + \mathfrak{C}_{t+1} \bar\gamma
    \right]
\end{align}
From here, using the definition of $\mathfrak{B}^{(3)}_{t+1}$, the expressions for $u^*$ and $\eta^*$ obtained above as well as the symmetry of $\mathfrak{B}^{(1)}_{t+1}$, we obtain
\begin{align}\label{eq:etastar:2}
    &   \mathfrak{B}^{(3)}_{t+1}\eta^*
        + \mathfrak{A}^{(1)}_{t+1} x_t 
        + \mathfrak{a}^{(1)}_{t+1} 
        + (\mathfrak{B}^{(1)}_{t+1})' u^*
        + \mathfrak{C}_{t+1} \gamma^*
        = 0
                                \nonumber\\
    \Leftrightarrow
    & \left(-\Xi_t^{-1} + \mathfrak{B}^{(1)}_{t+1}\right)\eta^*
        +  \mathfrak{A}^{(1)}_{t+1} x_t 
        + \mathfrak{a}^{(1)}_{t+1} 
        + \mathfrak{B}^{(1)}_{t+1} u^*
        + \mathfrak{C}_{t+1} \gamma^* 
        = 0
                                \nonumber\\
        \eta^* = 0
\end{align}
Substituting into \eqref{eq:ustar:1} and \eqref{eq:gammastar:1}, and solving the system \eqref{eq:ustar:1}-\eqref{eq:gammastar:1} for $u^*$ and $\gamma^*$, we obtain \eqref{eq:star:2}.

From the expressions \eqref{eq:ustar:1} for $u^*$, \eqref{eq:gammastar:1}for $\gamma^*$, and \eqref{eq:etastar:2} for $\eta^*$, we conclude the proof of the second part of the Proposition.
\end{proof} 

\begin{remark}
Recalling equation \eqref{eq:ustar:1} , we have 
\begin{align}\label{eq:u&eta}
u^* + \bar\eta
= -\left(\mathfrak{B}^{(1)}_{t+1}\right)^{-1}\left[ \mathfrak{A}^{(1)}_{t+1} x_t  + \mathfrak{a}^{(1)}_{t+1} + \mathfrak{C}_{t+1} \gamma^*\right],
\end{align}
which is the mean of the distribution of exploratory policies under $\mathbb{P}^{\bar\gamma,\bar\eta}$. By Proposition \ref{P1} we have $\eta^*=0$, so the optimal exploration is unbiased. Hence, we may write

\begin{align}\label{eq:star:3}
    u^*
    = -\left(\mathfrak{B}^{(1)}_{t+1}\right)^{-1}
    \left[
        \mathfrak{A}^{(1)}_{t+1} x_t  
        + \mathfrak{a}^{(1)}_{t+1}
    \right]
    -\left(\mathfrak{B}^{(1)}_{t+1}\right)^{-1} \mathfrak{C}_{t+1}\gamma^*.
\end{align}
This relation shows that we can decompose $u^*$ into the unconstrained asset allocation 
$$-\left(\mathfrak{B}^{(1)}_{t+1}\right)^{-1}
    \left[
        \mathfrak{A}^{(1)}_{t+1} x_t  
        + \mathfrak{a}^{(1)}_{t+1}
\right]$$ 
and a penalty term 
$$-\left(\mathfrak{B}^{(1)}_{t+1}\right)^{-1} \mathfrak{C}_{t+1}\gamma^*$$ 
induced by the free energy-entropy duality penalization and the choice of an optimal measure $\mathbb{P}^{\gamma^*,\eta^*}$.

\end{remark}

Next, we have 

\begin{proposition}\label{P2}
At the stationary point $(u_t^*,\gamma_t^*,\eta_t^*)$ the function $F(\bar u,\bar\gamma,\bar\eta)$ from \eqref{eq:auxfunc:F:shorthand} 
has the quadratic representation

\begin{align}\label{eq:auxfunc:F:optim:final}
    F(u_t^*, \gamma_t^*, \eta_t^*)  
    =& \frac{1}{2} x_t' \mathfrak{Q}_{t+1} x_t 
        + x_t' \mathfrak{q}_{t+1} x_t
        + \mathfrak{k_{t+1}},      
\end{align}
with
\begin{align}
    \mathfrak{Q}_{t+1}
    =&  - 
        \left(\mathfrak{A}^{(1)}_{t+1}\right)'
            (\mathfrak{B}^{(1)}_{t+1})^{-1} \mathfrak{A}^{(1)}_{t+1}
        - 2 \left(\mathfrak{A}^{(1)}_{t+1}\right)' 
            (\mathfrak{B}^{(1)}_{t+1})^{-1} 
            \mathfrak{C}_{t+1}(\mathfrak{G}_{t+1})^{-1} \mathfrak{A}^{(2)}_{t+1}  
                                \\
    &    +  \left(\mathfrak{A}^{(1)}_{t+1}\right)' 
            (\mathfrak{B}^{(1)}_{t+1})^{-1} 
            \mathfrak{C}_{t+1}(\mathfrak{G}_{t+1})^{-1}
            \mathfrak{C}_{t+1}' \left(\mathfrak{B}^{(1)}_{t+1}\right)^{-1} 
            \mathfrak{A}^{(1)}_{t+1} 
        +   \left(\mathfrak{A}^{(2)}_{t+1}\right)'
            (\mathfrak{G}_{t+1})^{-1}
            \mathfrak{A}^{(2)}_{t+1} 
                      \nonumber      
\end{align}     
\begin{align}
    \mathfrak{q}_{t+1}
    =&  - \left(\mathfrak{A}^{(1)}_{t+1}\right)'
            (\mathfrak{B}^{(1)}_{t+1})^{-1} 
            \mathfrak{a}^{(1)}_{t+1}
        - \left(\mathfrak{A}^{(1)}_{t+1}\right)'
            (\mathfrak{B}^{(1)}_{t+1})^{-1} 
            \mathfrak{C}_{t+1}(\mathfrak{G}_{t+1})^{-1} \mathfrak{a}^{(2)}_{t+1}
                                \\
    &   + \left(\mathfrak{A}^{(1)}_{t+1}\right)'
            (\mathfrak{B}^{(1)}_{t+1})^{-1} 
            \mathfrak{C}_{t+1}(\mathfrak{G}_{t+1})^{-1} \mathfrak{C}_{t+1}' \left(\mathfrak{B}^{(1)}_{t+1}\right)^{-1} \mathfrak{a}^{(1)}_{t+1}  
                                \nonumber\\
    &   - \left(\mathfrak{A}^{(2)}_{t+1}\right)'         
            (\mathfrak{G}_{t+1})^{-1} \mathfrak{C}_{t+1}' 
            (\mathfrak{B}^{(1)}_{t+1})^{-1} 
            \mathfrak{a}^{(1)}_{t+1}  
        + \left(\mathfrak{A}^{(2)}_{t+1}\right)'         
            (\mathfrak{G}_{t+1})^{-1} 
            \mathfrak{a}^{(2)}_{t+1}  
                                \nonumber
\end{align}
\begin{align}
\mathfrak{k}_{t+1}
    =&  - \frac{1}{2} 
            \left(\mathfrak{a}^{(1)}_{t+1}\right)' 
            (\mathfrak{B}^{(1)}_{t+1})^{-1} 
            \mathfrak{a}^{(1)}_{t+1}
        -   \left(\mathfrak{a}^{(1)}_{t+1}\right)' 
            (\mathfrak{B}^{(1)}_{t+1})^{-1} 
            \mathfrak{C}_{t+1}(\mathfrak{G}_{t+1})^{-1} \mathfrak{a}^{(2)}_{t+1}
                            \\
    &   +\frac{1}{2} 
            \left(\mathfrak{a}^{(1)}_{t+1}\right)' 
            (\mathfrak{B}^{(1)}_{t+1})^{-1} 
            \mathfrak{C}_{t+1}(\mathfrak{G}_{t+1})^{-1} \mathfrak{C}_{t+1}' \left(\mathfrak{B}^{(1)}_{t+1}\right)^{-1} \mathfrak{a}^{(1)}_{t+1} 
        + \frac{1}{2}       
            \left(\mathfrak{a}^{(2)}_{t+1}\right)' 
            (\mathfrak{G}_{t+1})^{-1}
            \mathfrak{a}^{(2)}_{t+1} 
                            \nonumber
\end{align}

\end{proposition}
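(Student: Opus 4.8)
The plan rests on one elementary fact: a quadratic map $z\mapsto \tfrac12 z'Hz+z'b$ with symmetric invertible $H$ and no constant term attains at its unique stationary point $z^\ast=-H^{-1}b$ the value $\tfrac12 (z^\ast)'b=-\tfrac12\,b'H^{-1}b$, so the whole computation reduces to one block matrix inversion. First I would invoke Proposition~\ref{P1}, which gives $\eta_t^\ast=0$, and evaluate $F$ in \eqref{eq:auxfunc:F:shorthand} at $\bar\eta=0$: all terms containing $\bar\eta$ drop out, and in the stacked variable $z:=\begin{pmatrix}\bar u\\ \bar\gamma\end{pmatrix}$ what survives is the constant-free quadratic
\begin{align}\label{eq:P2:Fred}
    F(\bar u,\bar\gamma,0)
    = \tfrac12\, z'
      \begin{pmatrix}\mathfrak{B}^{(1)}_{t+1} & \mathfrak{C}_{t+1}\\ \mathfrak{C}_{t+1}' & \mathfrak{B}^{(2)}_{t+1}\end{pmatrix}
      z
    + z'\begin{pmatrix}\alpha\\ \beta\end{pmatrix},
    \qquad
    \alpha:=\mathfrak{A}^{(1)}_{t+1}x_t+\mathfrak{a}^{(1)}_{t+1},\quad
    \beta:=\mathfrak{A}^{(2)}_{t+1}x_t+\mathfrak{a}^{(2)}_{t+1}.
\end{align}
Comparing with the first-order conditions \eqref{eq:ustar:1}--\eqref{eq:gammastar:1} taken at $\bar\eta=0$ shows that $\begin{pmatrix}u_t^\ast\\ \gamma_t^\ast\end{pmatrix}$ is exactly the stationary point of \eqref{eq:P2:Fred}, so $F(u_t^\ast,\gamma_t^\ast,\eta_t^\ast)$ equals the critical value of that quadratic.

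Next I would invert the $2\times2$ block Hessian in \eqref{eq:P2:Fred}. It is symmetric because $N_t$, $\Lambda_t$ and $P_{t+1}$ are symmetric (the symmetry of $P_{t+1}$ being part of the structure propagated by the recursion; cf.\ the remark following \eqref{short2}), and it is invertible under Assumption~\ref{as:coeffs}: with $\mathfrak{B}^{(1)}_{t+1}$ as pivot, the Schur complement equals $\mathfrak{B}^{(2)}_{t+1}-\mathfrak{C}_{t+1}'(\mathfrak{B}^{(1)}_{t+1})^{-1}\mathfrak{C}_{t+1}=-\mathfrak{G}_{t+1}$, with $\mathfrak{G}_{t+1}$ as in \eqref{short2}, which is positive definite (hence invertible) since $(\mathfrak{B}^{(1)}_{t+1})^{-1}>0$ and $-\mathfrak{B}^{(2)}_{t+1}>0$. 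Feeding the resulting block inverse into the value-at-the-critical-point identity yields
\begin{align}\label{eq:P2:val}
    2\,F(u_t^\ast,\gamma_t^\ast,0)
    =& -\alpha'(\mathfrak{B}^{(1)}_{t+1})^{-1}\alpha
        -2\,\alpha'(\mathfrak{B}^{(1)}_{t+1})^{-1}\mathfrak{C}_{t+1}\mathfrak{G}_{t+1}^{-1}\beta
      \\
    & +\alpha'(\mathfrak{B}^{(1)}_{t+1})^{-1}\mathfrak{C}_{t+1}\mathfrak{G}_{t+1}^{-1}\mathfrak{C}_{t+1}'(\mathfrak{B}^{(1)}_{t+1})^{-1}\alpha
        +\beta'\mathfrak{G}_{t+1}^{-1}\beta .
    \nonumber
\end{align}

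Finally I would substitute the affine forms of $\alpha$ and $\beta$ from \eqref{eq:P2:Fred} into \eqref{eq:P2:val} and collect the $x_t$-quadratic, $x_t$-linear and constant parts; this produces $\tfrac12 x_t'\mathfrak{Q}_{t+1}x_t+x_t'\mathfrak{q}_{t+1}+\mathfrak{k}_{t+1}$ with exactly the three coefficient blocks stated in \eqref{eq:auxfunc:F:optim:final} (the ``$x_t'\mathfrak{q}_{t+1}x_t$'' in \eqref{eq:auxfunc:F:optim:final} being a misprint for the linear term $x_t'\mathfrak{q}_{t+1}$). Here one repeatedly uses that a scalar equals its transpose, together with the symmetry of $(\mathfrak{B}^{(1)}_{t+1})^{-1}$, of $\mathfrak{G}_{t+1}^{-1}$, and of $(\mathfrak{B}^{(1)}_{t+1})^{-1}\mathfrak{C}_{t+1}\mathfrak{G}_{t+1}^{-1}\mathfrak{C}_{t+1}'(\mathfrak{B}^{(1)}_{t+1})^{-1}$, in order to fold each pair of mirror-image cross terms into one term carrying a factor $2$ — which is why $\mathfrak{Q}_{t+1}$ and $\mathfrak{q}_{t+1}$ come out in the stated, non-symmetrised, form. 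Conceptually the argument is a one-line fact (the value of a quadratic at its critical point) followed by a Schur-complement inversion, so the only real work is clerical; the step I expect to generate errors — rather than present a genuine obstacle — is keeping the sign flip from the Schur complement ($-\mathfrak{G}_{t+1}$, not $\mathfrak{G}_{t+1}$) and the factors of $2$ consistent through the $\alpha,\beta$ expansion.
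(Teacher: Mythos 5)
Your proof is correct and reaches exactly the stated coefficients, but it is organized differently from the paper's. The paper substitutes the explicit stationary controls $u^*,\gamma^*,\eta^*$ from \eqref{eq:ustar:1}--\eqref{eq:etastar:2} back into $F$ term by term, introducing the intermediate quantities $K^{(1)}_{t+1},K^{(2)}_{t+1},k^{(1)}_{t+1},k^{(2)}_{t+1}$ and then regrouping ``after some tedious calculations.'' You instead exploit $\eta^*_t=0$ to collapse $F$ to a constant-free quadratic in the stacked variable $(\bar u',\bar\gamma')'$, apply the identity that such a quadratic equals $-\tfrac12 b'H^{-1}b$ at its critical point, and read everything off a single Schur-complement block inversion, in which $\mathfrak G_{t+1}$ appears as minus the Schur complement of $\mathfrak B^{(1)}_{t+1}$ in the reduced Hessian. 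The two computations are algebraically equivalent (the paper's elimination of $u^*$ from the $\gamma$-equation is the Schur complement in disguise), but your route replaces the bookkeeping with one structural identity and makes transparent both why $\mathfrak G_{t+1}$ governs all the $\beta$-terms and why Assumption \ref{as:coeffs} guarantees the needed invertibility ($\mathfrak B^{(1)}_{t+1}>0$ and $-\mathfrak B^{(2)}_{t+1}>0$ give $\mathfrak G_{t+1}>0$); the symmetry of $\mathfrak B^{(1)}_{t+1}$, $\mathfrak B^{(2)}_{t+1}$, hence of $H$ and $\mathfrak G_{t+1}$, which your critical-value formula needs, is available from the remark following \eqref{short2}. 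Your reading of ``$x_t'\mathfrak q_{t+1}x_t$'' as a misprint for the linear term $x_t'\mathfrak q_{t+1}$ is also the only dimensionally consistent one. One presentational caution: when you justify that $(u^*_t,\gamma^*_t)$ is the stationary point of the reduced quadratic, say explicitly that the first-order conditions in $\bar u$ and $\bar\gamma$ for the full $F$, evaluated at $\bar\eta=0$, coincide with those of $F(\cdot,\cdot,0)$ because the $\bar\eta$-dependence of $\partial F/\partial\bar u$ and $\partial F/\partial\bar\gamma$ enters only through the terms $\mathfrak B^{(1)}_{t+1}\bar\eta$ and $\mathfrak C_{t+1}'\bar\eta$, which vanish at $\bar\eta=0$; this is the one step where the reduction could silently fail for a general three-block quadratic.
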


\begin{proof}
Replacing the values of $u^*,\gamma^*,\eta^*$ from \eqref{eq:ustar:1}, \eqref{eq:gammastar:1}, \eqref{eq:etastar:2} into $F(\bar u,\bar\gamma,\bar\eta)$ and using locally the shorthand notations
\begin{align}
    K^{(1)}_{t+1}
    =& \mathfrak{A}^{(1)}_{t+1} 
            + \mathfrak{C}_{t+1}(\mathfrak{G}_{t+1})^{-1} K^{(2)}_{t+1}&K^{(2)}_{t+1}
    :=& \mathfrak{A}^{(2)}_{t+1} -\mathfrak{C}_{t+1}'\left(\mathfrak{B}^{(1)}_{t+1}\right)^{-1}\mathfrak{A}^{(1)}_{t+1}
                                \\      
    k^{(1)}_{t+1} 
    =& \mathfrak{a}^{(1)}_{t+1} 
        + \mathfrak{C}_{t+1}(\mathfrak{G}_{t+1})^{-1} k^{(2)}_{t+1}&k^{(2)}_{t+1}
    :=& \mathfrak{a}^{(2)}_{t+1}
    -\mathfrak{C}_{t+1}'\left(\mathfrak{B}^{(1)}_{t+1}\right)^{-1}\mathfrak{a}^{(1)}_{t+1} 
                                \nonumber
\end{align}
we obtain
\begin{align}
    & F(u^*, \gamma^*, \eta^*)    
                                \\
    =& \frac{1}{2} x_t'\left(K^{(1)}_{t+1}\right)' 
            (\mathfrak{B}^{(1)}_{t+1})^{-1} 
            K^{(1)}_{t+1} x_t 
        + x_t'\left(K^{(1)}_{t+1}\right)'
            (\mathfrak{B}^{(1)}_{t+1})^{-1}  k^{(1)}_{t+1}  
                                \nonumber\\
    &   + \frac{1}{2} \left(k^{(1)}_{t+1}
            \right)'
            (\mathfrak{B}^{(1)}_{t+1})^{-1} 
            k^{(1)}_{t+1}
        - x_t'\left(K^{(1)}_{t+1}\right)'
            (\mathfrak{B}^{(1)}_{t+1})^{-1} 
            \mathfrak{A}^{(1)}_{t+1} x_t    
                            \nonumber\\
    &   - x_t' \left(K^{(1)}_{t+1}\right)'
            (\mathfrak{B}^{(1)}_{t+1})^{-1} 
            \mathfrak{a}^{(1)}_{t+1} 
        - \left(k^{(1)}_{t+1}\right)'
            (\mathfrak{B}^{(1)}_{t+1})^{-1} 
            \mathfrak{A}^{(1)}_{t+1} x_t
                            \nonumber\\
    &   - \left(k^{(1)}_{t+1}\right)'
            (\mathfrak{B}^{(1)}_{t+1})^{-1} 
            \mathfrak{a}^{(1)}_{t+1} 
                                \nonumber\\
    &   + \frac{1}{2} x_t'
            \left(K^{(2)}_{t+1}\right)' 
            (\mathfrak{G}_{t+1})^{-1}
            \mathfrak{B}^{(2)}_{t+1} (\mathfrak{G}_{t+1})^{-1} 
            K^{(2)}_{t+1} x_t 
                            \nonumber\\
    &   + x_t'
            \left(K^{(2)}_{t+1}\right)' 
            (\mathfrak{G}_{t+1})^{-1} 
            \mathfrak{B}^{(2)}_{t+1} (\mathfrak{G}_{t+1})^{-1} 
            k^{(2)}_{t+1} 
                                \nonumber\\
    &+ \frac{1}{2} \left(k^{(2)}_{t+1}\right)' 
            (\mathfrak{G}_{t+1})^{-1}
            \mathfrak{B}^{(2)}_{t+1} (\mathfrak{G}_{t+1})^{-1} 
            k^{(2)}_{t+1} 
                                \nonumber\\
    &+ x_t'\left(K^{(2)}_{t+1}\right)' 
            (\mathfrak{G}_{t+1})^{-1}
            \mathfrak{A}^{(2)}_{t+1} x_t
        + x_t'\left(K^{(2)}_{t+1}\right)' 
            (\mathfrak{G}_{t+1})^{-1} \mathfrak{a}^{(2)}_{t+1}
                                \nonumber\\
    &+ \left(k^{(2)}_{t+1}\right)' 
            (\mathfrak{G}_{t+1})^{-1}
            \mathfrak{A}^{(2)}_{t+1} x_t
        + \left(k^{(2)}_{t+1}\right)' 
            (\mathfrak{G}_{t+1})^{-1}
            \mathfrak{a}^{(2)}_{t+1}
                            \nonumber\\
    &- x_t'\left(K^{(1)}_{t+1}\right)'
            \left(\mathfrak{B}^{(1)}_{t+1}\right)^{-1}
            \mathfrak{C}_{t+1} \left(\mathfrak{G}_{t+1}\right)^{-1} 
            K^{(2)}_{t+1} x_t 
                                \nonumber\\
    &- x_t'\left(K^{(1)}_{t+1}\right)'
            \left(\mathfrak{B}^{(1)}_{t+1}\right)^{-1}
            \mathfrak{C}_{t+1} \left(\mathfrak{G}_{t+1}\right)^{-1} 
            k^{(2)}_{t+1} 
                            \nonumber\\
    &- \left(k^{(1)}_{t+1}\right)'
            \left(\mathfrak{B}^{(1)}_{t+1}\right)^{-1}
            \mathfrak{C}_{t+1} \left(\mathfrak{G}_{t+1}\right)^{-1} 
            K^{(2)}_{t+1} x_t 
                                \nonumber\\
    &- \left( k^{(1)}_{t+1} \right)'
            \left(\mathfrak{B}^{(1)}_{t+1}\right)^{-1}
            \mathfrak{C}_{t+1} \left(\mathfrak{G}_{t+1}\right)^{-1} 
            k^{(2)}_{t+1} 
                                \nonumber
\end{align} 
Representing this expression as a quadratic form in $x_t$, we obtain, after some tedious calculations, the relation \eqref{eq:auxfunc:F:optim:final} with the coefficients as indicated in the statement of the proposition.

\end{proof}

We can now state our main

\begin{theorem}\label{theo:main:recursions}
The value function $V$ has, as mentioned in \eqref{eq:V:quadform0}, a quadratic form of the type
\begin{align}\label{eq:V:quadform}
    V_t(x_t) = \frac{1}{2}x_t' P_t x_t + x_t' p_t + r_t,
\end{align}
where $P_t$, $p_t$, and $r_t$ are deterministic and satisfy the following backward recursions
\begin{enumerate}[(i)]   
\item
\begin{align}\label{eq:recursion:P}
    P_{t} 
    =&  - \left(\mathfrak{A}^{(1)}_{t+1}\right)'
            (\mathfrak{B}^{(1)}_{t+1})^{-1} \mathfrak{A}^{(1)}_{t+1}
                                \\
    &   - 2 \left(\mathfrak{A}^{(1)}_{t+1}\right)' 
            (\mathfrak{B}^{(1)}_{t+1})^{-1} 
            \mathfrak{C}_{t+1}(\mathfrak{G}_{t+1})^{-1} \mathfrak{A}^{(2)}_{t+1}  
                                \nonumber\\
    &   + \left(\mathfrak{A}^{(1)}_{t+1}\right)' 
            (\mathfrak{B}^{(1)}_{t+1})^{-1} 
            \mathfrak{C}_{t+1}(\mathfrak{G}_{t+1})^{-1}
            \mathfrak{C}_{t+1}' \left(\mathfrak{B}^{(1)}_{t+1}\right)^{-1} 
            \mathfrak{A}^{(1)}_{t+1}  
                                \nonumber\\
    &   +   \left(\mathfrak{A}^{(2)}_{t+1}\right)'
            (\mathfrak{G}_{t+1})^{-1}
            \mathfrak{A}^{(2)}_{t+1}
        + 2 \left(
            \theta M + \frac{1}{2} A'P_{t+1}A 
        \right),
                                \nonumber\\
    P_T &= M_T,
                                \nonumber
\end{align}

\item
\begin{align}\label{eq:recursion:p}
    p_{t} 
    =&  - \left(\mathfrak{A}^{(1)}_{t+1}\right)'
            (\mathfrak{B}^{(1)}_{t+1})^{-1} 
            \mathfrak{a}^{(1)}_{t+1}
        - \left(\mathfrak{A}^{(1)}_{t+1}\right)'
            (\mathfrak{B}^{(1)}_{t+1})^{-1} 
            \mathfrak{C}_{t+1}(\mathfrak{G}_{t+1})^{-1} \mathfrak{a}^{(2)}_{t+1}
                                \\
    &   + \left(\mathfrak{A}^{(1)}_{t+1}\right)'
            (\mathfrak{B}^{(1)}_{t+1})^{-1} 
            \mathfrak{C}_{t+1}(\mathfrak{G}_{t+1})^{-1} \mathfrak{C}_{t+1}' \left(\mathfrak{B}^{(1)}_{t+1}\right)^{-1} \mathfrak{a}^{(1)}_{t+1}  
                                \nonumber\\
    &   - \left(\mathfrak{A}^{(2)}_{t+1}\right)'
            (\mathfrak{G}_{t+1})^{-1} \mathfrak{C}_{t+1}' 
            (\mathfrak{B}^{(1)}_{t+1})^{-1} 
            \mathfrak{a}^{(1)}_{t+1}  
        + \left(\mathfrak{A}^{(2)}_{t+1}\right)'
            (\mathfrak{G}_{t+1})^{-1} 
            \mathfrak{a}^{(2)}_{t+1} 
                                \nonumber\\
    &   + A' P_{t+1} a 
        + \theta m 
        + A' p_{t+1},
                                \nonumber\\
        p_T =& m_T,             \nonumber
\end{align}

\item
\begin{align}\label{eq:recursion:k}
    r_{t} 
    =&  - \frac{1}{2} 
            \left(\mathfrak{a}^{(1)}_{t+1}\right)' 
            (\mathfrak{B}^{(1)}_{t+1})^{-1} 
            \mathfrak{a}^{(1)}_{t+1}
                            \\
    &   -   \left(\mathfrak{a}^{(1)}_{t+1}\right)' 
            (\mathfrak{B}^{(1)}_{t+1})^{-1} 
            \mathfrak{C}_{t+1}(\mathfrak{G}_{t+1})^{-1} \mathfrak{a}^{(2)}_{t+1}
                            \nonumber\\
    &   +\frac{1}{2} 
        \left(\mathfrak{a}^{(1)}_{t+1}\right)' 
            (\mathfrak{B}^{(1)}_{t+1})^{-1} 
            \mathfrak{C}_{t+1}(\mathfrak{G}_{t+1})^{-1} \mathfrak{C}_{t+1}' \left(\mathfrak{B}^{(1)}_{t+1}\right)^{-1} \mathfrak{a}^{(1)}_{t+1} 
                            \nonumber\\
    &   + \frac{1}{2}       
            \left(\mathfrak{a}^{(2)}_{t+1}\right)' 
            (\mathfrak{G}_{t+1})^{-1}
            \mathfrak{a}^{(2)}_{t+1}
                            \nonumber\\
    &   + \frac{1}{2} \tr\left( B'\Xi_t B P_{t+1} \right)
        + \frac{1}{2} \tr\left( \Lambda_t  P_{t+1} \right)
        + r_{t+1} 
                            \nonumber\\
    &   + \frac{1}{2} a'P_{t+1}a
        + a' p_{t+1}
        + \theta \tr \left( \Xi_t N_t \right),
                            \nonumber\\
    & r_T = 0.              \nonumber
\end{align}
\end{enumerate}

Furthermore, the candidate optimal controls are given by the stationary point $(u^*,\gamma^*,\eta^*)$ of $F(\bar u,\bar\gamma,\bar\eta)$ according to Proposition \ref{P1} (formulas \eqref{eq:ustar:1},\eqref{eq:gammastar:1}, \eqref{eq:etastar:2})

\end{theorem}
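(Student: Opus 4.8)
The plan is to obtain Theorem \ref{theo:main:recursions} as a corollary of Lemma \ref{L1} and Propositions \ref{P1}--\ref{P2}, by backward induction on $t$. For the base case $t=T$, the terminal condition \eqref{eq:DPP:TC} is already of the quadratic form \eqref{eq:V:quadform} in $x_T$, which fixes the initialization of the three recursions \eqref{eq:recursion:P}--\eqref{eq:recursion:k}.

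For the inductive step, fix $t\in\{0,\dots,T-1\}$ and suppose $V_{t+1}(x_{t+1})=\tfrac12 x_{t+1}'P_{t+1}x_{t+1}+x_{t+1}'p_{t+1}+r_{t+1}$ with $P_{t+1},p_{t+1},r_{t+1}$ deterministic. First I would invoke Lemma \ref{L1} to turn $\mathbf{E}_{t,x_t}^{\bar\gamma,\bar\eta}[V_{t+1}(x_{t+1})]$ into an explicit polynomial in $(x_t,\bar u,\bar\gamma,\bar\eta)$; substituting this into the DPP recursion \eqref{eq:DPP:DPP} and reorganizing yields, by the first part of Proposition \ref{P1}, the saddle-point representation \eqref{eq:DPP:Vquadform1} together with the explicit stationary point $(u_t^*,\gamma_t^*,\eta_t^*)$ of $F$, with in particular $\eta_t^*=0$. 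Under Assumption \ref{as:coeffs} — which involves $P_{t+1}$, already available at this stage of the backward recursion — the function $F$ is strictly convex in $\bar u$ and strictly concave in $(\bar\gamma,\bar\eta)$, so (as discussed in Section \ref{sec:saddleconditions}) this stationary point is a genuine saddle point, the order of $\inf_{\bar u}$ and $\sup_{\bar\gamma,\bar\eta}$ may be interchanged, and $\inf_{\bar u}\sup_{\bar\gamma,\bar\eta}F(\bar u,\bar\gamma,\bar\eta)=F(u_t^*,\gamma_t^*,\eta_t^*)$. Proposition \ref{P2} then identifies this value with $\tfrac12 x_t'\mathfrak{Q}_{t+1}x_t+x_t'\mathfrak{q}_{t+1}+\mathfrak{k}_{t+1}$.

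It then remains to collect terms: inserting the last expression into \eqref{eq:DPP:Vquadform1} and matching the quadratic, linear and $x_t$-free parts gives $V_t(x_t)=\tfrac12 x_t'P_tx_t+x_t'p_t+r_t$ with $P_t=\mathfrak{Q}_{t+1}+2(\theta M+\tfrac12 A'P_{t+1}A)$, $p_t=\mathfrak{q}_{t+1}+A'P_{t+1}a+\theta m+A'p_{t+1}$, and $r_t=\mathfrak{k}_{t+1}+\tfrac12\tr(B'\Xi_tBP_{t+1})+\tfrac12\tr(\Lambda_tP_{t+1})+r_{t+1}+\tfrac12 a'P_{t+1}a+a'p_{t+1}+\theta\tr(\Xi_tN_t)$; expanding $\mathfrak{Q}_{t+1},\mathfrak{q}_{t+1},\mathfrak{k}_{t+1}$ from Proposition \ref{P2} reproduces verbatim \eqref{eq:recursion:P}, \eqref{eq:recursion:p}, \eqref{eq:recursion:k}. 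Since the shorthand matrices \eqref{short}--\eqref{short2} depend only on the (deterministic) model parameters and on $P_{t+1},p_{t+1}$, the triple $(P_t,p_t,r_t)$ is again deterministic, which closes the induction. The statement on the candidate optimal controls is precisely the second part of Proposition \ref{P1} (formulas \eqref{eq:ustar:1}, \eqref{eq:gammastar:1}, \eqref{eq:etastar:2}, equivalently \eqref{eq:star:2}).

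The one step that is not mere bookkeeping is the identity $\inf_{\bar u}\sup_{\bar\gamma,\bar\eta}F=F(u_t^*,\gamma_t^*,\eta_t^*)$: it hinges on Assumption \ref{as:coeffs} holding at every $t$, which in turn requires that the matrix $P_{t+1}$ produced by \eqref{eq:recursion:P} keeps $\mathfrak{B}^{(1)}_{t+1}>0$ and the block condition (ii) valid along the recursion — equivalently, via Schur complements, that $\mathfrak{B}^{(2)}_{t+1}<0$ so that $\mathfrak{G}_{t+1}=\mathfrak{C}_{t+1}'(\mathfrak{B}^{(1)}_{t+1})^{-1}\mathfrak{C}_{t+1}-\mathfrak{B}^{(2)}_{t+1}>0$ is invertible and all the inverses in Propositions \ref{P1}--\ref{P2} are well defined. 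This propagation is exactly what Section \ref{sec:saddleconditions} addresses; granting it, the remainder of the proof is the lengthy but routine algebra already performed in Proposition \ref{P2}.
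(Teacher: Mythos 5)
Your proposal is correct and follows essentially the same route as the paper: the paper's proof likewise obtains the theorem by substituting Proposition \ref{P2}'s quadratic expression for $F(u_t^*,\gamma_t^*,\eta_t^*)$ into the saddle-point representation \eqref{eq:DPP:Vquadform1} from Proposition \ref{P1} and matching the quadratic, linear and constant terms to read off the recursions for $P_t$, $p_t$, $r_t$, with the backward-induction structure and the role of Assumption \ref{as:coeffs} (handled in the paper via Proposition \ref{P3}) left implicit rather than spelled out as you do.
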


\begin{proof} We need to prove only the statement in the first part of the theorem concerning the expression for $V_t(x_t)$. From Propositions \ref{P1}  and \ref{P2}, we obtain
\begin{align}\label{eq:DPP:Vquadform:2}
    & V_t(x_t)
                                    \\
    =& F(u_t^*,\gamma_t^*,\eta_t^*)
        +   x_t' \left(\theta M + \frac{1}{2} A 'P_{t+1}A \right)x_t
        + x_t'\left(
            A' P_{t+1} a + \theta m + A' p_{t+1}
          \right)
                                    \nonumber\\
    &
        + \frac{1}{2} \tr\left( B'\Xi_t B P_{t+1} \right)
        + \frac{1}{2} \tr\left( \Lambda_t  P_{t+1} \right)
        + r_{t+1} 
        + \frac{1}{2} a'P_{t+1}a
        + a' p_{t+1}
        + \theta \tr \left( \Xi_t N_t \right)
                                    \nonumber\\
    =& \frac{1}{2} x_t' \mathfrak{Q}_{t+1} x_t 
        + x_t' \mathfrak{q}_{t+1} x_t
        + \mathfrak{k_{t+1}}
        +   x_t' \left(\theta M + \frac{1}{2} A 'P_{t+1}A \right)x_t
        + x_t'\left(
            A' P_{t+1} a + \theta m + A' p_{t+1}
          \right)
                                    \nonumber\\
    &
        + \frac{1}{2} \tr\left( B'\Xi_t B P_{t+1} \right)
        + \frac{1}{2} \tr\left( \Lambda_t  P_{t+1} \right)
        + r_{t+1} 
        + \frac{1}{2} a'P_{t+1}a
        + a' p_{t+1}
        + \theta \tr \left( \Xi_t N_t \right).
                                    \nonumber
\end{align}
which allows to conclude the proof.

\end{proof}

\begin{corollary}
    We have
    \begin{align}\label{eq:supI}
        \inf_{\bar{u}} I(\bar{u};T,\theta) = \exp\left\{\frac{1}{2}x_t' P_t x_t + x_t' p_t + r_t\right\}
    \end{align}
    and 
    \begin{align}\label{eq:supJ}
        \sup_{\bar{u}} J(\bar{u};T,\theta) = -\frac{1}{\theta}\left( \frac{1}{2}x_t' P_t x_t + x_t' p_t + r_t\right)
    \end{align}
    with $P_t$, $p_t$, and $r_t$ given by \eqref{eq:recursion:P},\eqref{eq:recursion:p}, and \eqref{eq:recursion:k}, and optimal controls given by the stationary point $(u^*,\gamma^*,\eta^*)$ according to Proposition \ref{P1}.
\end{corollary}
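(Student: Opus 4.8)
The plan is to obtain the corollary by assembling three ingredients that are already in place: the energy--entropy reduction \eqref{criterion1}, the dynamic programming identification of the global game value with $V_0$, and the explicit quadratic form of $V_t$ supplied by Theorem \ref{theo:main:recursions}. No new computation is needed; the whole argument is a chain of substitutions plus one elementary monotonicity remark.

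For \eqref{eq:supI} I would start from \eqref{criterion1}, namely $\inf_{\bar u} I(\bar u;T,\theta) = \exp\{V(T;\theta)\}$. By construction of the recursion \eqref{eq:DPP:TC}--\eqref{eq:DPP:DPP}, obtained by applying the DPP to the game \eqref{eq:criterion:I:Pbar}, the global optimal value $V(T;\theta)$ equals $V_0(x_0)$, the game value at the initial time with initial state $x_0$. Theorem \ref{theo:main:recursions} then gives $V_0(x_0) = \frac{1}{2} x_0' P_0 x_0 + x_0' p_0 + r_0$, with $P_0,p_0,r_0$ produced by the backward recursions \eqref{eq:recursion:P}--\eqref{eq:recursion:k} initialised at $P_T=M_T$, $p_T=m_T$, $r_T=0$. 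Substituting this into \eqref{criterion1} yields \eqref{eq:supI}; the generic subscript $t$ in the statement is read as the time-$t$ conditional value $V_t(x_t)$, the same identity holding at every $t$ by the recursive structure.

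For \eqref{eq:supJ} I would use the defining relation \eqref{eq:criterion:I}, i.e. $J(\bar u;T,\theta) = -\frac{1}{\theta}\ln I(\bar u;T,\theta)$ with $I>0$. Taking the supremum over $\bar u$ and using that $y\mapsto -\frac{1}{\theta}\ln y$ is strictly decreasing on $(0,\infty)$ for $\theta>0$, the supremum over $\bar u$ becomes an infimum inside, and $\ln$ commutes with $\inf_{\bar u}$ exactly as already invoked via Lemma 5.3.1 of \citet{Meneghini1994} in \eqref{eq:EEDuality:inf}; hence $\sup_{\bar u} J(\bar u;T,\theta) = -\frac{1}{\theta}\ln \inf_{\bar u} I(\bar u;T,\theta)$, and \eqref{eq:supJ} follows from \eqref{eq:supI}. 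The assertion on optimal controls is then immediate from Proposition \ref{P1}, once the stationary point $(u^*,\gamma^*,\eta^*)$ is confirmed to be a genuine saddle point under Assumption \ref{as:coeffs}, as discussed in Section \ref{sec:saddleconditions}.

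The only point that requires a little care -- and the closest thing to an obstacle here -- is the passage from $\sup_{\bar u} J$ to $\inf_{\bar u} I$, which hinges on the sign of $\theta$: for $\theta>0$ the transformation $-\frac{1}{\theta}\ln(\cdot)$ is decreasing and the correspondence is exactly as stated, whereas for $\theta\in(-1,0)$ it is increasing and one would instead relate $\sup_{\bar u} J$ to $\sup_{\bar u} I$. This is a routine case distinction that leaves the recursions and the value function formula untouched; all remaining steps are direct substitutions.
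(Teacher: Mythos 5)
Your proposal is correct and follows essentially the same route as the paper: equation \eqref{eq:supI} is read off from \eqref{criterion1} together with the quadratic form of the value function from Theorem \ref{theo:main:recursions}, and \eqref{eq:supJ} then follows from the definition of $I$ at \eqref{eq:criterion:I}. Your added remark on the sign of $\theta$ (the map $y\mapsto -\frac{1}{\theta}\ln y$ being decreasing only for $\theta>0$) is a legitimate point of care that the paper's two-line proof glosses over, but it does not change the argument.
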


\begin{proof}
Equation \eqref{eq:supI}  follows immediately from \eqref{eq:EEDuality:inf} (see also \eqref{criterion1}), that is
\begin{align}
    &
    \inf_{\bar{u}} I(u;T,\theta) 
    = 
    \exp\left\{ \inf_{\bar{u}} \sup_{\bar\gamma,\bar\eta}
    \mathbf{E}^{\bar\gamma, \bar\eta} \left[ 
        \theta G_T 
        -\frac{1}{2} \sum_{t=0}^{T-1} \left( \bar\gamma_t' \Lambda^ {-1}_t \bar\gamma_t + \bar\eta_t' \Xi_t^{-1} \bar\eta_t\right)
    \right] \right\},
\end{align}
and \eqref{eq:supJ} follows from the definition of $I$ at \eqref{eq:criterion:I}.
\end{proof}

\subsection{Sufficient conditions for the existence of a saddle point}\label{sec:saddleconditions}

The first-order conditions have led us to identify in Proposition \ref{P1} the unique stationary point $(u^*,\eta^*,\gamma*)$ of the quadratic function $F(\bar u,\bar\gamma,\bar\eta)$. We next look for conditions that ensure that $(u^*,\eta^*,\gamma*)$ is actually a saddle point of $F(\bar u,\bar\gamma,\bar\eta)$ in the sense of \eqref{eq:saddle:1}. For this, we shall involve the second-order conditions. To begin with, notice that the Hessian for our game problem is
\begin{align}
H^{\bar{u},\bar\gamma,\bar\eta}_t = \begin{pmatrix}
    \frac{\partial^2 F}{\partial \bar{u}^2}
    & \frac{\partial^2 F}{\partial \bar{u} \partial \bar\gamma}
    & \frac{\partial^2 F}{\partial \bar{u} \partial \bar\eta}
    \\
	\frac{\partial^2 F}{\partial \bar\gamma \partial \bar{u}}
    & \frac{\partial^2 F}{\partial \bar\gamma^2} 
	& \frac{\partial^2 F}{\partial\bar\gamma \partial\bar\eta} 
    \\
	\frac{\partial^2 F}{\partial \bar\eta \partial \bar{u}}
    &\frac{\partial^2 F}{\partial \bar\eta \partial \bar\gamma}
	& \frac{\partial^2 F}{\partial \bar\eta^2}
\end{pmatrix}
=
\begin{pmatrix}
		\mathfrak{B}^{(1)}_{t+1} 
        & \mathfrak{C}_{t+1} 
        & \mathfrak{B}^{(1)}_{t+1} 
        \\
        \mathfrak{C}_{t+1}' 
        & \mathfrak{B}^{(2)}_{t+1}  
		& \mathfrak{C}_{t+1}' 
        \\ 
        \mathfrak{B}^{(1)}_{t+1}
		& \mathfrak{C}_{t+1}  
		& \mathfrak{B}^{(3)}_{t+1}
\end{pmatrix}
\end{align}
Expressing the maximizing controls as a block vector $\bar\nu = \begin{pmatrix} \bar\gamma' & \bar\eta'\end{pmatrix}'$, we rewrite the Hessian matrix as the following block saddle point matrix
\begin{align}
    H^{\bar{u},\bar\nu}_t = \begin{pmatrix}
    \frac{\partial^2 F}{\partial \bar{u}^2}
    & \frac{\partial^2 F}{\partial \bar{u} \partial \bar\nu}
    \\
	\frac{\partial^2 F}{\partial \bar\nu \partial \bar{u}}
    & H_t^{\bar\gamma,\bar\eta}
\end{pmatrix}
\end{align}
where $H_t^{\bar\gamma,\bar\eta}$ is itself a block matrix, specifically
\begin{align}
H^{\bar\gamma,\bar\eta}_t = \begin{pmatrix}
	\frac{\partial^2 F}{\partial \bar\gamma^2} 
	& \frac{\partial^2 F}{\partial\bar\gamma \partial\bar\eta} \\
	\frac{\partial^2 F}{\partial \bar\eta \partial \bar\gamma}
	& \frac{\partial^2 F}{\partial \bar\eta^2}
\end{pmatrix}
= 
\begin{pmatrix}
		\mathfrak{B}^{(2)}_{t+1}  
		& \mathfrak{C}_{t+1}' \\ 
		\mathfrak{C}_{t+1}  
		& \mathfrak{B}^{(3)}_{t+1}
\end{pmatrix}
\end{align}

We can now state 
\begin{proposition}\label{P3}  Under Assumption \ref{as:coeffs} the stationary point $(u_t^*,\eta_t^*,\gamma_t*)$ derived in Proposition \ref{P1} is a saddle point for $V_t(x_t)$ as expressed in \eqref{eq:DPP:Vquadform1}.
\end{proposition}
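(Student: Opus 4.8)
The plan is to leverage the reduction already performed in Proposition~\ref{P1}. There the saddle-point property of $V_t(x_t)$ in the form \eqref{eq:DPP:Vquadform1} was shown to be equivalent to the saddle-point chain \eqref{eq:saddle:1} for the quadratic function $F(\bar u,\bar\gamma,\bar\eta)$, the equivalence resting only on the fact that the Hamiltonian $\mathcal H(t,x_t,\cdot,\cdot,\cdot)$ and $F(\cdot,\cdot,\cdot)$ differ by a quantity that does not depend on the controls. Consequently it is enough to show that, under Assumption~\ref{as:coeffs}, the unique stationary point $(u_t^*,\gamma_t^*,\eta_t^*)$ of $F$ identified in Proposition~\ref{P1} satisfies
\[
F(u_t^*,\bar\gamma,\bar\eta)\;\le\;F(u_t^*,\gamma_t^*,\eta_t^*)\;\le\;F(\bar u,\gamma_t^*,\eta_t^*)
\qquad\text{for all }\bar u,\bar\gamma,\bar\eta .
\]

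First I would use that $F$ is quadratic with constant Hessian $H^{\bar u,\bar\gamma,\bar\eta}_t$ and that its gradient vanishes at $(u_t^*,\gamma_t^*,\eta_t^*)$, so the second-order Taylor expansion about the stationary point is exact:
\[
F(\bar u,\bar\gamma,\bar\eta)=F(u_t^*,\gamma_t^*,\eta_t^*)+\tfrac12
\begin{pmatrix}\bar u-u_t^*\\ \bar\gamma-\gamma_t^*\\ \bar\eta-\eta_t^*\end{pmatrix}'
H^{\bar u,\bar\gamma,\bar\eta}_t
\begin{pmatrix}\bar u-u_t^*\\ \bar\gamma-\gamma_t^*\\ \bar\eta-\eta_t^*\end{pmatrix}.
\]
Specialising to $\bar u=u_t^*$, the off-diagonal blocks coupling $\bar u$ with $(\bar\gamma,\bar\eta)$ multiply the zero vector and drop out, so, writing $\bar\nu:=\begin{pmatrix}\bar\gamma'&\bar\eta'\end{pmatrix}'$ and $\nu_t^*:=\begin{pmatrix}(\gamma_t^*)'&(\eta_t^*)'\end{pmatrix}'$,
\[
F(u_t^*,\bar\gamma,\bar\eta)-F(u_t^*,\gamma_t^*,\eta_t^*)=\tfrac12\,(\bar\nu-\nu_t^*)'\,H^{\bar\gamma,\bar\eta}_t\,(\bar\nu-\nu_t^*)\;\le\;0,
\]
the sign being guaranteed precisely by part~(ii) of Assumption~\ref{as:coeffs}. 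Specialising instead to $\bar\gamma=\gamma_t^*$, $\bar\eta=\eta_t^*$ yields, by the same mechanism,
\[
F(\bar u,\gamma_t^*,\eta_t^*)-F(u_t^*,\gamma_t^*,\eta_t^*)=\tfrac12\,(\bar u-u_t^*)'\,\mathfrak{B}^{(1)}_{t+1}\,(\bar u-u_t^*)\;\ge\;0,
\]
by part~(i). Taken together, these two one-sided inequalities are exactly \eqref{eq:saddle:1}.

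To finish, I would read the equivalence of Proposition~\ref{P1} in reverse: adding the control-free term $\mathcal H-F$ back to each of the three members of \eqref{eq:saddle:1} restores the saddle-point chain for $\mathcal H(t,x_t,\cdot,\cdot,\cdot)$, which by the definition \eqref{eq:Hamiltonian} of $\mathcal H$ together with \eqref{eq:DPP:Vquadform1} is precisely the assertion that $(u_t^*,\gamma_t^*,\eta_t^*)$ is a saddle point of the $\inf\sup$ problem defining $V_t(x_t)$; in particular the min--max identity \eqref{eq:DPP:Vquadform3} holds and the common value is attained at the stationary point.

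The main obstacle is the bookkeeping underlying the two one-sided inequalities: one has to confirm that the mixed second derivatives --- the blocks $\mathfrak{C}_{t+1}$ and $\mathfrak{B}^{(1)}_{t+1}$ that couple $\bar u$ with $\bar\gamma,\bar\eta$ inside $H^{\bar u,\bar\gamma,\bar\eta}_t$ --- genuinely contribute nothing once one group of variables is frozen at its starred value, and, relatedly, that the Hessian blocks entering the quadratic forms are symmetric so that the forms above are the intended ones, which rests on the symmetry of $N_t$ and of $P_{t+1}$, the latter being inherited from the Riccati recursion \eqref{eq:recursion:P}. A secondary point worth noting is that Assumption~\ref{as:coeffs} is stated with strict definiteness, which is more than the semidefiniteness strictly required for the saddle inequalities but is convenient for the matrix inversions used in Proposition~\ref{P1}.
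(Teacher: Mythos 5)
Your proposal is correct and follows essentially the same route as the paper: reduce the saddle-point property of $V_t(x_t)$ to the chain \eqref{eq:saddle:1} for $F$, then verify the two one-sided inequalities via the second-order conditions $\mathfrak{B}^{(1)}_{t+1}>0$ and $-H^{\bar\gamma,\bar\eta}_t>0$ from Assumption \ref{as:coeffs}. Your exact quadratic Taylor expansion about the stationary point, with the observation that the cross-blocks drop out once one group of variables is frozen, simply makes explicit what the paper compresses into ``apply the usual second-order sufficiency condition separately to $\bar u$ and $\bar\nu$.''
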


\begin{proof}  According to the proof of Proposition \ref{P1},  the search for a saddle point for $V_t(x_t)$ reduces to the search of a saddle point of $F(\bar u,\bar\gamma,\bar\eta)$ in the sense of \eqref{eq:saddle:1}. It means that, at the stationary point $(u^*,\eta^*,\gamma*)$, the function $F(\bar u,\bar\gamma,\bar\eta)$ must have a minimum in $\bar u$ for all values of $\bar\nu=(\bar\gamma',\bar\eta_t')'$. A sufficient condition for this to happen is that, at $(u^*,\eta^*,\gamma*)$, the function $F(\bar u,\bar\gamma,\bar\eta)$ is convex in $\bar u$.  Analogously, the function  $F(\bar u,\bar\gamma,\bar\eta)$ must have a maximum in $\nu^*=({\gamma^*}',{\eta^*}')'$ for all values of $\bar u$. A sufficient condition is that, at $(u^*,\eta^*,\gamma^*)$, the function $F(\bar u,\bar\gamma,\bar\eta)$ is concave in $\bar \nu$. We do not need further global convexity/concavity properties of $F(\bar u,\bar\gamma,\bar\eta)$. Therefore, we can apply the usual second-order sufficiency condition separately to $\bar u$ and $\bar\nu$, namely
\begin{equation}\label{suff:1}
\frac{\partial^2 F}{\partial \bar{u}^2}(\bar u_,\bar\nu)=\mathfrak{B}^{(1)}_{t+1}=B'P_{t+1}B+2\theta\,N>0,\quad \forall t\in[0,T]
\end{equation}
\begin{equation}\label{suff:2}
-H^{\bar\gamma,\bar\eta}_t=-\begin{pmatrix}
		\mathfrak{B}^{(2)}_{t+1}  
		& \mathfrak{C}_{t+1}' \\
		\mathfrak{C}_{t+1}  
		& \mathfrak{B}^{(3)}_{t+1}
\end{pmatrix}>0,
\end{equation}
which is guaranteed by Assumption \ref{as:coeffs}.
\end{proof}

\begin{remark}\label{Rsuff}
Notice that conditions such as \eqref{suff:1} are standard for both LQG and LEQG problems, and that condition \eqref{suff:2} results from the energy-entropy duality. According to Sylvester's criterion, we can express the negative definiteness condition \eqref{suff:2} as the following three simultaneous conditions
\begin{equation}\label{suffcon}
\left\{\begin{array}{l}
det\left(H^{\bar\gamma,\bar\eta}_t\right)>0 \\
 \frac{\partial^2 F}{\partial \bar\gamma^2}(u_t^*,\gamma_t^*,\eta_t^*)=\mathfrak{B}^{(2)}_{t+1}=P_{t+1}-\Lambda_t^{-1}<0,\\
\frac{\partial^2 F}{\partial \bar\eta^2}(u_t^*,\gamma_t^*,\eta_t^*)=\mathfrak{B}^{(3)}_{t+1}=-\Xi_t^{-1}+B'P_{t+1}B+2\theta\,N_t<0
\end{array}
\right.
.
    \end{equation}
    
The second condition in \eqref{suffcon} is the second-order condition we would have if we maximized for $\bar\gamma$ only, ignoring $\bar\eta$. As $\Lambda_t$ and $P_{t+1}$ are both deterministic, this condition can be checked in advance, even if the recursion for $P_{t}$ depends on $\Lambda_{t+1}$ via the terms $\mathfrak{B}^{(2)}_{t+1}$ and $\mathfrak{G}_{t+1}$, defined respectively at \eqref{short} and \eqref{short2}.  This second condition in \eqref{suffcon} is also reminiscent of the \emph{risk-resistance condition} found in the LEQG literature \citep{ShaijuPetersenFormulasLQR_LQ_LEQG2008}. This was to be expected. The energy-entropy duality expresses the original LEQG problem as an LQG problem. Hence, any condition necessary for the resolution of the LEQG problem should also apply in some way to the equivalent LQG game. Such a condition will manifest through the penalizing control $\gamma$, which is the driving term of the change of measure in the energy-entropy duality. 

Analogously, the last condition in \eqref{suffcon}  is the second-order condition if we maximize for $\bar\eta$ and ignore $\bar\gamma$. This condition sets a bound on the covariance of exploration $\Xi_t,$ based on the parameters of the problem, $\theta, N_t, B$, and the quadratic coefficient $P_{t+1}$. As $\Xi_t$ and $P_{t+1}$ are deterministic, an appropriate value can be selected in advance for $\Xi_t$.

\end{remark}

\section{Analysis and Conclusions}\label{S.4}

 \subsection{Analysis of the sufficiency conditions for the existence of a saddle point}\label{S.4.1}

Proposition \ref{P3} showed that the Assumptions \ref{as:coeffs} are sufficient for the controls $(u^*,\eta^*,\gamma*)$ from Proposition \ref{P1} to lead to a saddle point for $V_t(x_t)$ as expressed in \eqref{eq:DPP:Vquadform1}. Using Remark \ref{Rsuff}, we express these conditions in terms of the original model parameters in Proposition \ref{P4}. To simplify the technicalities and clarify the exposition, we consider the scalar case. Nevertheless, this special case allows us to analyze the impact of these sufficiency conditions when applying the solution obtained in section \ref{S.3}.

\begin{proposition}\label{P4}
Given scalar model parameters $A>0,B>0,M>0,\theta>0$ and choosing for simplicity of presentation $Q=m=n=0$, an explicit sufficient condition for the stationary point 
$(u^*,\eta^*,\gamma*)$ from Proposition \ref{P1} to be a saddle point for $V_t(x_t)$ with a $\min$ in $u^*$ and a joint $\max$ in $\nu^*=(\gamma^*,\eta^*)$ is given by the following three requirements:
\begin{enumerate}[i)]
\item $N_t > \max \, \left[0,-\frac{B^2P_{t+1}}{2\theta}\right] \quad \mbox{(binding only if $P_{t+1}<0$)}$;
\item $\Lambda_t^{-1}>P_{t+1}$;
\item $\Xi_t^{-1}>\max\left[2\theta\,N_t+B^2P_{t+1},2\theta\,N_t+\frac{\Lambda_t^{-1}B^2P_{t+1}}{P_{t+1}-\Lambda_t^{-1}}\right]
=
\begin{cases} 2\theta\,N_t+B^2P_{t+1} & \text{if } P_{t+1}>0 \\
2\theta\,N_t+\frac{\Lambda_t^{-1}B^2P_{t+1}}{P_{t+1}-\Lambda_t^{-1}}    & \text{if } P_{t+1}<0
\end{cases}$
\end{enumerate}

\end{proposition}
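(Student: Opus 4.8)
The plan is simply to specialize to the scalar regime the saddle-point sufficiency conditions already isolated in Proposition \ref{P3} and Remark \ref{Rsuff}, and to read off the three requirements. By Proposition \ref{P3}, the stationary point $(u^*,\eta^*,\gamma^*)$ of Proposition \ref{P1} is a saddle point for $V_t(x_t)$ --- a $\min$ in $\bar u$ and a joint $\max$ in $\bar\nu=(\bar\gamma,\bar\eta)$ --- once Assumption \ref{as:coeffs} holds, which by Remark \ref{Rsuff} is equivalent to \eqref{suff:1} together with the three inequalities in \eqref{suffcon}, namely $\det H^{\bar\gamma,\bar\eta}_t>0$, $\mathfrak{B}^{(2)}_{t+1}<0$ and $\mathfrak{B}^{(3)}_{t+1}<0$. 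With $d_x=d_u=1$ and $Q=m=n=0$, the shorthands \eqref{short}--\eqref{short2} collapse to the scalars $\mathfrak{B}^{(1)}_{t+1}=B^2P_{t+1}+2\theta N_t$, $\mathfrak{B}^{(2)}_{t+1}=P_{t+1}-\Lambda_t^{-1}$, $\mathfrak{B}^{(3)}_{t+1}=-\Xi_t^{-1}+B^2P_{t+1}+2\theta N_t$ and $\mathfrak{C}_{t+1}=BP_{t+1}$, so each condition becomes an explicit scalar inequality in $N_t$, $\Lambda_t^{-1}$, $\Xi_t^{-1}$ and the deterministic coefficient $P_{t+1}$ delivered by the recursion \eqref{eq:recursion:P} of Theorem \ref{theo:main:recursions}.

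Requirements i) and ii) are then immediate. Dividing \eqref{suff:1}, i.e.\ $B^2P_{t+1}+2\theta N_t>0$, by $2\theta>0$ gives $N_t>-B^2P_{t+1}/(2\theta)$; intersecting with the standing semidefiniteness constraint $N_t\ge 0$ yields i), with the lower bound active precisely when $P_{t+1}<0$. The inequality $\mathfrak{B}^{(2)}_{t+1}<0$ is literally ii), $\Lambda_t^{-1}>P_{t+1}$.

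The work is in requirement iii), which must encode both $\mathfrak{B}^{(3)}_{t+1}<0$ and $\det H^{\bar\gamma,\bar\eta}_t>0$. The former reads $\Xi_t^{-1}>2\theta N_t+B^2P_{t+1}$. For the latter I would write $\det H^{\bar\gamma,\bar\eta}_t=\mathfrak{B}^{(2)}_{t+1}\mathfrak{B}^{(3)}_{t+1}-(BP_{t+1})^2>0$ and use ii) --- which makes $\mathfrak{B}^{(2)}_{t+1}<0$ --- to divide through, reversing the inequality, and then solve the resulting affine inequality for $\Xi_t^{-1}$; this produces the second bound in iii). Imposing that $\Xi_t^{-1}$ exceed both bounds simultaneously gives the $\max$ in iii), and the closed form of the case split follows by subtracting the two bounds: the difference is $B^2P_{t+1}$ times a fraction whose sign is pinned down by ii) (together with $\Lambda_t^{-1}>0$), so that the dominant bound is determined entirely by $\operatorname{sign}(P_{t+1})$ --- the $\mathfrak{B}^{(3)}_{t+1}<0$ bound when $P_{t+1}>0$ and the determinant bound when $P_{t+1}<0$.

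The only genuinely delicate step is this sign bookkeeping in the determinant condition: dividing by the negative quantity $\mathfrak{B}^{(2)}_{t+1}$ flips the inequality, and one has to carry this through while recognizing that the two Sylvester sub-conditions on $\Xi_t^{-1}$ are not independent --- one subsumes the other according to $\operatorname{sign}(P_{t+1})$ --- which is precisely what the $\max$ and the case distinction in iii) record. Finally, since $P_{t+1}$, $\Lambda_t$ and $\Xi_t$ are deterministic, as noted in Remark \ref{Rsuff} all three requirements can be verified offline.
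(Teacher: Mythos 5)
Your overall strategy is exactly the paper's: specialize the scalar form of Assumption \ref{as:coeffs} via Remark \ref{Rsuff} and check that i)--iii) imply \eqref{suff:1} and the three inequalities in \eqref{suffcon}. Requirements i) and ii) are handled correctly. The gap is at the step you yourself flag as delicate: carrying out the division of the determinant condition by the negative quantity $\mathfrak{B}^{(2)}_{t+1}$ does \emph{not} produce the second bound in iii). Writing $\det H^{\bar\gamma,\bar\eta}_t=(P_{t+1}-\Lambda_t^{-1})(-\Xi_t^{-1}+B^2P_{t+1}+2\theta N_t)-B^2P_{t+1}^2>0$ and dividing by $P_{t+1}-\Lambda_t^{-1}<0$ gives
\begin{equation*}
\Xi_t^{-1}>2\theta N_t+B^2P_{t+1}+\frac{B^2P_{t+1}^2}{\Lambda_t^{-1}-P_{t+1}}=2\theta N_t+\frac{\Lambda_t^{-1}B^2P_{t+1}}{\Lambda_t^{-1}-P_{t+1}},
\end{equation*}
whose denominator has the opposite sign to the one in requirement iii). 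Moreover, since the extra term $B^2P_{t+1}^2/(\Lambda_t^{-1}-P_{t+1})$ is strictly positive for $P_{t+1}\neq0$ under ii), this determinant bound \emph{always} strictly dominates the $\mathfrak{B}^{(3)}_{t+1}<0$ bound $2\theta N_t+B^2P_{t+1}$, not only when $P_{t+1}<0$ as you assert. Your case analysis ("the $\mathfrak{B}^{(3)}_{t+1}<0$ bound dominates when $P_{t+1}>0$") therefore fails.

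The consequence is material: for $P_{t+1}>0$ the three requirements as stated do not imply $\det H^{\bar\gamma,\bar\eta}_t>0$. Concretely, $P_{t+1}=B=1$, $\Lambda_t^{-1}=2$, $\theta=N_t=1$, $\Xi_t^{-1}=3.5$ satisfies i)--iii) yet gives $\det H^{\bar\gamma,\bar\eta}_t=(-1)(-0.5)-1<0$. To be fair, the paper's own proof is a bare assertion that iii) together with ii) imply the first condition in \eqref{suffcon}, and the displayed bound in the Proposition appears to carry the same sign slip in the denominator, so you have faithfully reproduced the paper's reasoning rather than introduced a new error; but a complete proof must actually perform this computation, and doing so shows that the single correct requirement (which subsumes $\mathfrak{B}^{(3)}_{t+1}<0$) is $\Xi_t^{-1}>2\theta N_t+\Lambda_t^{-1}B^2P_{t+1}/(\Lambda_t^{-1}-P_{t+1})$ for either sign of $P_{t+1}$. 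As written, deferring "the resulting affine inequality" to the reader hides the fact that the algebra does not close.
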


\begin{proof}
\mbox{}\\*
Item i) implies \eqref{suff:1} for $\theta>0$. Item ii) implies the second condition in \eqref{suffcon} while Item iii) implies the third condition in \eqref{suffcon}. Finally, Item iii) together with Item ii)  imply the first condition in \eqref{suffcon}.

\end{proof}

We can now propose the following

\noindent{\bf Procedure {(\color{Maroon} recursion)}:}
Given is a discrete time period $t=0,\cdots,T$
\begin{enumerate}[1.]
\item Initialize the procedure by choosing $M,M_T,Q,m,m_T,n$  and by selecting a value for the risk sensitivity parameter $\theta\>\>(\theta>0)$. 
For simplicity of presentation and in line with Proposition \ref{P4}, also choose $Q=m=n=0$.
\item Put $P_T=M_T, p_T=m_T, r_T=0$ 
\item Register the estimated values for $A$ and $B$ as resulting from the most recent estimation/exploration step.
\item Choose $N_{T-1}$ according to i) in Proposition \ref{P4}  and choose $\Lambda_{T-1}$ and $\Xi_{T-1}$ according to ii) and iii) in Proposition \ref{P4}.
\item Compute $P_{T-1},p_{T-1},r_{T-1}$ according to \eqref{eq:recursion:P},  \eqref{eq:recursion:p}, \eqref{eq:recursion:k} in Theorem \ref{theo:main:recursions} and repeat steps 4. and 5. successively for the times $T-1,T-2,\cdots,1,0$.
\item Run the system from $t=0$ to $t=T$ with the control $u^*$ given by \eqref{eq:star:2}  of Proposition  \ref{P1} and with an unbiased randomization noise $v_t$ given by a zero-mean Gaussian with variance $\Xi_t$. For a simulation, choose the most recent estimated values of A and B and a system noise $w_t\sim \mathcal{N}(\gamma_t^*, \Lambda_t)$.
\item Reestimate $A$ and $B$ and repeat the previous steps down to $t=0$.

\end{enumerate}

\begin{remark} Since the criterion $I(u; T,\theta)$ is positive, the value function $V_t(x_t)$ will also be positive, so the most likely sign for $P_t$ is positive. By i) in Proposition \ref{P4}, the control penalization coefficient $N_t$ can then be chosen arbitrarily as a positive matrix. If we run a real system, the system noise $w_t$ might have a covariance $\Lambda_t$ with $\Lambda_t^{-1}<P_{t+1}$. In such a case, we cannot be sure that our controls $u^*$ and $\gamma^*$ lead to a saddle point as required in \eqref{eq:criterion:I:Pbar} (see also \eqref{eq:DPP:Vquadform1}).  On the other hand, if we simulate the system, the system noise has to have a variance that, in the scalar case, is smaller for larger values of $P_t$. Furthermore, the variance $\Xi_t$ of the unbiased randomization noise has to be such that
$$\Xi_t^{-1}>2\theta\,N_t+\max\left[B^2P_{t+1},\frac{\Lambda_t^{-1}B^2P_{t+1}}{P_{t+1}-\Lambda_t^{-1}}\right],$$
i.e. the more we penalize the control and the larger we take the risk sensitivity parameter $\theta$, the less dispersed we have to choose the randomization that provides exploration. The bound also depends on the computed value of $P_{t+1}$ that, in turn, depends on the estimated values of $A$ and $B$.
\end{remark}

\subsection{Numerical Example}\label{S.4.2}

We provide here a simple numerical example for the scalar case considered in the previous section \ref{S.4.1}. In addition to providing illustrative numbers, one major interest of this example is that it shows that the setup of Proposition \ref{P4} is sufficient, but not necessary. 

Table \ref{tab:example:parameters} reports the model parameters, which are all constant. This choice of parameters departs from the setup of Proposition \ref{P4} in two ways. First, $A = -0.2 <0$, implying that $x_t$ has a tendency to revert to its long-term mean. Second, $Q=1 \neq 0$ to model an interaction between the state and control in the cost function\footnote{This model specification is consistent with the continuous-time risk-sensitive models arising in investment management \citep[see for instance][]{kuna02,DavisLleoBook2014,LleoRunggaldierSeparation24}}. Nevertheless, we will see below that Assumption \ref{as:coeffs} is satisfied, and so, by Proposition \ref{P3}, the game has a unique saddle point at $(u^*,\gamma^*,\eta^*)$.

\begin{table}[H]
    \centering
    \begin{tabular}{|c|c|c|c|c|c|}
       \hline
       \multicolumn{2}{|c|}{Global Parameters}  
       &  \multicolumn{2}{c}{System Dynamics}
       &  \multicolumn{2}{|c|}{Cost Function}\\
       \hline
       Parameter    & Value                      
       &  Parameter & Value
       &  Parameter & Value \\
       \hline
       $\theta$     & 1                         
       & $a$    & 0 
       & $M$    & 2 \\
       $T$      & 25  
       & $A$    & -0.2
       & $N$    & 2   \\
       &  
       & $B$    & 0.4 
       & $Q$    & 1 \\
            &  
       & $\Lambda$  &  0.15
       & $m$        & 0 \\
         &  
       & $\Xi$ & 0.15 
       & $n$        & 0 \\
         &  
       & $x_0$ & 1 
       & $M_T$ & 4      \\
                &  
       &  &  
       & $m_T$ & 0      \\
    \hline
    \end{tabular}
    \caption{Model parameters}
    \label{tab:example:parameters}
\end{table}

The solution of the game is displayed in Table \ref{tab:example:results} below, which shows the evolution of the state variable $x_t$ under both the $\mathbb{P}$ and $\mathbb{P}^{\gamma^*,\eta^*}$ measures, of the value function $V_t(x_t)$ and its parameters $P_t, p_t, r_t$, of the optimal controls $u^*_t$, $\gamma^*_t$, and $\eta^*_t$, and of the intermediate Fraktur parameters required to check Assumption \ref{as:coeffs}. As $\mathfrak{B}^{(1)}_t >0, \; t=1,\ldots,T$, Assumption \ref{as:coeffs}(i) is satisfied. Next, we check that Assumption \ref{as:coeffs}(ii), that is,
\begin{equation}
-H^{\bar\gamma,\bar\eta}_t
=-\begin{pmatrix}
		\mathfrak{B}^{(2)}_{t+1}  
		& \mathfrak{C}_{t+1}' \\
		\mathfrak{C}_{t+1}  
		& \mathfrak{B}^{(3)}_{t+1}
\end{pmatrix}>0,
\end{equation}
is also satisfied. To check this assumption, we use Sylvester's criterion. From Table \ref{tab:example:results}, we have $\mathfrak{B}^{(2)}_t < 0, \; t=0,\ldots, T$. We also see that the determinant $
\det \left( -H^{\bar\gamma,\bar\eta}_t \right)
= \mathfrak{B}^{(2)}_{t}\mathfrak{B}^{(3)}_{t}  
- \mathfrak{C}_{t}^2
$, is strictly positive for $t=0,\ldots,T$. Therefore, Assumption \ref{as:coeffs} is fully satisfied and the game has a unique saddle point at $(u^*_t,\gamma^*_t,\eta^*_t)$. Notice also that, despite the different setups, the sufficient conditions i) to iii) of Proposition \ref{P4} would be satisfied for this example.   
\begin{sidewaystable}
    \resizebox{\textheight}{!}{\begin{tabular}{ccccccccccccccc}
        \hline
    $t$
    & $x_t$ under $\mathbb{P}$
    & $x_t$ under $\mathbb{P}^{\gamma^*,\eta^*}$
    & $P_t$
    & $p_t$
    & $r_t$
    & $V_t(x_t)$
    & $u^*_t$
    & $\gamma^*_t$
    & $\eta^*_t$   

    & $\mathfrak{B}^{(1)}_t$
    & $\mathfrak{B}^{(2)}_t$
    & $\mathfrak{B}^{(3)}_t$
    & $\mathfrak{C}_t$
    & $\det \left( -H^{\bar\gamma,\bar\eta}_t \right)$
    \\    
        \hline
0   &   1.0000   &   1.0000   &   4.5811   &   0.0000   &   17.3880   &   19.6786   &   0.0270   &   -0.4156   &   -0.0000   &      &      &      &      &      \\
1   &   -0.2655   &   -0.6782   &   4.5811   &   0.0000   &   16.6894   &   17.7430   &   -0.0183   &   0.2818   &   0.0000   &   4.7330   &   -2.0855   &   -1.9337   &   1.8324   &   0.6749   \\
2   &   -0.1205   &   0.2024   &   4.5811   &   0.0000   &   15.9909   &   16.0847   &   0.0055   &   -0.0841   &   -0.0000   &   4.7330   &   -2.0855   &   -1.9337   &   1.8324   &   0.6749   \\
3   &   -0.0357   &   -0.1270   &   4.5811   &   0.0000   &   15.2923   &   15.3293   &   -0.0034   &   0.0528   &   0.0000   &   4.7330   &   -2.0855   &   -1.9337   &   1.8324   &   0.6749   \\
4   &   -0.0629   &   0.0018   &   4.5811   &   0.0000   &   14.5938   &   14.5938   &   0.0000   &   -0.0007   &   -0.0000   &   4.7330   &   -2.0855   &   -1.9337   &   1.8324   &   0.6749   \\
5   &   -0.0581   &   -0.1008   &   4.5811   &   0.0000   &   13.8952   &   13.9185   &   -0.0027   &   0.0419   &   0.0000   &   4.7330   &   -2.0855   &   -1.9337   &   1.8324   &   0.6749   \\
6   &   0.0225   &   0.1376   &   4.5811   &   0.0000   &   13.1967   &   13.2400   &   0.0037   &   -0.0572   &   -0.0000   &   4.7330   &   -2.0855   &   -1.9337   &   1.8324   &   0.6749   \\
7   &   0.3837   &   0.2704   &   4.5811   &   0.0000   &   12.4981   &   12.6655   &   0.0073   &   -0.1123   &   -0.0000   &   4.7330   &   -2.0855   &   -1.9337   &   1.8324   &   0.6749   \\
8   &   0.2583   &   0.1856   &   4.5811   &   0.0000   &   11.7995   &   11.8785   &   0.0050   &   -0.0771   &   -0.0000   &   4.7330   &   -2.0855   &   -1.9337   &   1.8324   &   0.6749   \\
9   &   -0.3427   &   -0.3417   &   4.5811   &   0.0000   &   11.1010   &   11.3685   &   -0.0092   &   0.1420   &   -0.0000   &   4.7330   &   -2.0855   &   -1.9337   &   1.8324   &   0.6749   \\
10   &   0.0247   &   0.2576   &   4.5811   &   0.0000   &   10.4024   &   10.5544   &   0.0070   &   -0.1070   &   0.0000   &   4.7330   &   -2.0855   &   -1.9337   &   1.8324   &   0.6749   \\
11   &   0.0176   &   -0.1256   &   4.5811   &   0.0000   &   9.7039   &   9.7400   &   -0.0034   &   0.0522   &   -0.0000   &   4.7330   &   -2.0855   &   -1.9337   &   1.8324   &   0.6749   \\
12   &   0.0367   &   0.0863   &   4.5811   &   0.0000   &   9.0053   &   9.0224   &   0.0023   &   -0.0359   &   -0.0000   &   4.7330   &   -2.0855   &   -1.9337   &   1.8324   &   0.6749   \\
13   &   0.0111   &   -0.1089   &   4.5811   &   0.0000   &   8.3067   &   8.3339   &   -0.0029   &   0.0453   &   0.0000   &   4.7330   &   -2.0855   &   -1.9337   &   1.8324   &   0.6749   \\
14   &   0.1961   &   0.3188   &   4.5811   &   0.0000   &   7.6082   &   7.8409   &   0.0086   &   -0.1325   &   0.0000   &   4.7330   &   -2.0855   &   -1.9337   &   1.8324   &   0.6749   \\
15   &   -0.0315   &   -0.1993   &   4.5811   &   0.0000   &   6.9096   &   7.0006   &   -0.0054   &   0.0828   &   -0.0000   &   4.7330   &   -2.0856   &   -1.9337   &   1.8324   &   0.6750   \\
16   &   -0.1065   &   0.0364   &   4.5811   &   0.0000   &   6.2111   &   6.2141   &   0.0010   &   -0.0151   &   0.0000   &   4.7330   &   -2.0856   &   -1.9337   &   1.8324   &   0.6751   \\
17   &   0.1562   &   0.1640   &   4.5810   &   0.0000   &   5.5125   &   5.5741   &   0.0044   &   -0.0681   &   -0.0000   &   4.7330   &   -2.0857   &   -1.9337   &   1.8324   &   0.6755   \\
18   &   -0.0318   &   -0.1949   &   4.5807   &   0.0000   &   4.8140   &   4.9010   &   -0.0052   &   0.0809   &   -0.0000   &   4.7329   &   -2.0860   &   -1.9338   &   1.8323   &   0.6765   \\
19   &   -0.0052   &   0.0639   &   4.5800   &   0.0000   &   4.1156   &   4.1249   &   0.0017   &   -0.0265   &   0.0000   &   4.7328   &   -2.0867   &   -1.9339   &   1.8320   &   0.6792   \\
20   &   -0.0517   &   -0.0946   &   4.5780   &   0.0000   &   3.4173   &   3.4378   &   -0.0025   &   0.0392   &   0.0000   &   4.7325   &   -2.0887   &   -1.9342   &   1.8312   &   0.6867   \\
21   &   0.1290   &   0.2099   &   4.5725   &   0.0000   &   2.7195   &   2.8202   &   0.0051   &   -0.0863   &   0.0000   &   4.7316   &   -2.0942   &   -1.9351   &   1.8290   &   0.7073   \\
22   &   0.0813   &   -0.0077   &   4.5573   &   0.0000   &   2.0230   &   2.0231   &   -0.0001   &   0.0031   &   -0.0000   &   4.7292   &   -2.1094   &   -1.9375   &   1.8229   &   0.7638   \\
23   &   0.2333   &   0.2933   &   4.5150   &   0.0000   &   1.3302   &   1.5244   &   0.0015   &   -0.1122   &   -0.0000   &   4.7224   &   -2.1516   &   -1.9443   &   1.8060   &   0.9217   \\
24   &   0.0781   &   -0.0031   &   4.3929   &   0.0000   &   0.6480   &   0.6480   &   0.0001   &   0.0010   &   0.0000   &   4.7029   &   -2.2738   &   -1.9638   &   1.7571   &   1.3778   \\
25   &   0.2215   &   0.2497   &   4.0000   &   0.0000   &   0.0000   &   0.1247   &      &      &      &   4.6400   &   -2.6667   &   -2.0267   &   1.6000   &   2.8444   \\  
         \hline
    \end{tabular}}
    \caption{One run of the numerical example}
    \label{tab:example:results}
\end{sidewaystable}

\begin{remark}
As expected, in Table \ref{tab:example:results}, $\eta^*$ is practically 0.
\end{remark}

\subsection{Integration with Policy Gradient and Actor–Critic}\label{S.4.3}

In Sections \ref{sec:setup} and \ref{S.3}, we introduced the LEQG control problem, expressed it as an equivalent LQG game using the duality between free energy and entropy, and derived the solution to this game. Section \ref{S.4.1} then analyzed sufficient conditions for the game to have a unique saddle point and proposed a procedure to estimate sequentially the model parameters and check that the sufficient requirements of Proposition \ref{P4} are satisfied.  

As an alternative, we could also have developed and implemented a policy gradient approach, such as an actor-critic algorithm, to solve the problem numerically using only realized sample trajectories of the state space $x_t$ and cost $G_T$. A full and rigorous treatment is beyond the scope of this paper and should be the focus of further research. We thus only sketch the main ideas here. 

Policy gradient methods have gained popularity in recent years following groundbreaking advances in the field of reinforcement learning and a string of highly successful applications, most strikingly to the game of Go. We refer the reader to Chapter 13 in the classic text by \citet{SuttonBarto2018}. 

All policy gradient methods follow the same essential steps. First, approximate the policy using a conveniently parametrized function. Second, compute the gradient of a given performance measure, usually a value function, with respect to the parameters of the function approximating the controls. Third, update the parameters by performing a gradient descent update if the aim is to minimize the performance measure, or a gradient ascent update if the aim is to maximize the performance measure. 

Actor-critic methods build on this approach to update both the policy and the value function. The \emph{actor} performs the three policy gradient steps to update the parameters of the policy approximation, at the current estimate for the value function, while the \emph{critic} performs three similar steps to update the parameters of an approximation of the value function, at the current estimate for the policy. 

To implement a simple policy gradient method in our context, a natural approach is to extend the approach by \citet{hamblyPolicyGradientMethods2021,hamblyPolicyGradientMethods2023} on policy gradient methods for linear quadratic regulators and linear-quadratic games. Their work captures the essential features of LQG control problems and games, namely the quadratic cost in the state and in the controls, and the linear dynamics of the state variable, and provides strong convergence results. However, it does not consider some additional features that are present in our model and are important for applications, in particular to finance, such as the interaction between the state and control in the cost function, first-order terms in the cost function, and the possibility of a mean reversion to a long-term mean different from 0 for the state. Hence, further work is necessary. 

For our LQG game, the optimal controls are affine in the state, and the value function is quadratic. We can therefore use the following parametrization\footnote{Note that this is a full parametrization. We could use the fact that $\eta^* = 0$ to substantially simplify the mathematical exposition and reduce the amount of numerical work.}:
\begin{align}
    u_t = \mathtt{D}_t x_t + \mathtt{d}_t
                        \qquad
    \bar\gamma_t = \mathtt{E}_t x_t + \mathtt{e}_t
                        \qquad
    \bar\eta_t = \mathtt{F}_t x_t + \mathtt{f}_t
\end{align} 
for $t=0,\ldots,T-1$, where $\mathtt{D}_t \in \mathbb{R}^{d_u \times d_x}$, $\mathtt{d}_t \in \mathbb{R}^{d_u}$, $\mathtt{E}_t \in \mathbb{R}^{d_x \times d_x}$, $\mathtt{e}_t \in \mathbb{R}^{d_x}$, $\mathtt{F}_t \in \mathbb{R}^{d_u \times d_x}$, $\mathtt{f}_t \in \mathbb{R}^{d_u}$. Define $\mathtt{\mathbf{D}} := \left(\mathtt{D}_0, \ldots, \mathtt{D}_{T-1} \right)$ with similar definitions for $\mathtt{\mathbf{d}}, \mathtt{\mathbf{E}}, \mathtt{\mathbf{e}}, \mathtt{\mathbf{F}}, \mathtt{\mathbf{d}}$, and let $\mathbf{K} = \left(\mathtt{\mathbf{D}},\mathtt{\mathbf{d}},\mathtt{\mathbf{E}},\mathtt{\mathbf{e}},\mathtt{\mathbf{F}},\mathtt{\mathbf{f}}\right)$. Furthermore, we take the critic as
\begin{align}
V_t(x)=x_t' \mathtt{P}_t x + x_t'\mathtt{p}_t + \mathtt{r}_t,
\end{align}
where $\mathtt{P}_t \in \mathbb{R}^{d_x \times d_x}, \mathtt{p}_t \in \mathbb{R}^{d_x}$, and $\mathtt{r}_t \in \mathbb{R}$.

Based on \eqref{eq:criterion:I:Pbar}, the objective function $C$, as a function of the policy parameters, is
\begin{align}
    & C(\mathtt{D},\mathtt{d},\mathtt{E},\mathtt{e}, \mathtt{F},\mathtt{f}) 
                                \\
    =& \mathbf{E}^{\bar\gamma, \bar\eta} \left[ 
        \theta G_T(\mathtt{D},\mathtt{d},\mathtt{E},\mathtt{e}) 
        -\frac{1}{2} \sum_{t=0}^{T-1} \left\{\left(\mathtt{E}_t x_t + \mathtt{e}_t\right)' \Lambda^ {-1}_t \left(\mathtt{E}_t x_t + \mathtt{e}_t\right) 
        \right.\right.
                            \nonumber\\
    & \left.\left.
        + \left(\mathtt{F}_t x_t + \mathtt{f}_t\right)' \Xi_t^{-1} \left(\mathtt{F}_t x_t + \mathtt{f}_t\right)\right\}
    \right]
                            \nonumber
\end{align}
With this notation, Hambly et al.'s so-called \emph{Natural Policy Gradient} updating rules (extended to include a zeroth order term) for a sequence of episodes $m=1,\ldots,M$ are: 
\begin{align}
    \mathbf K^{(m+1)}=\mathbf K^{(m)} \pm \delta_m \,\widehat{\nabla}_{\mathbf K} C\!\left(\mathbf K^{(m)}\right)\, \left(\widehat{\Sigma}^{(m)}_x\right)^{-1},           
\end{align}
for $t=0,1,\ldots,T-1$ and where $\delta$ is the step size. For the case of unknown model parameters, $\widehat{\nabla}_{\mathbf K} C$ is the empirical policy gradient, and $\Sigma^{\mathbf{K}}_t := \mathbf{E}\left[ x^{\mathbf{K}}_t \left(x^{\mathbf{K}}_t\right)'\right]$ is the empirical state covariance (serving as a natural preconditioner). The sign is negative (descent) for the minimizing controller parameters \((\mathtt D,\mathtt d)\) and positive (ascent) for the adversarial parameters $(\mathtt E,\mathtt e,\mathtt F,\mathtt f)$.

From there, the next step is to consider a full actor-critic approach, where the gradient update to the policy approximation combines with a gradient update to the value function approximation. The critic parameters \((\mathtt P_t,\mathtt p_t,\mathtt r_t)\) are updated on a faster timescale by Temporal Differencing. The key here is to establish that the joint gradient updates will converge to the optimal controls and the optimal value function.

Finally, we note that global convergence results exist for policy-gradient methods for LQR problems-- see \citep{fazelGlobalConvergencePolicy2018,hamblyPolicyGradientMethods2021} for control problems and \citet{hamblyPolicyGradientMethods2023} for linear-quadratic games. On the actor-critic front, \citet{alacaogluNaturalActorCriticFramework2022} obtained promising results for 2-player Markov games.

\subsection{Conclusions}\label{S.4.4}

We have seen that an LEQG problem with a randomized control emulating exploration can, via the energy-entropy duality, be reduced to a risk-neutral LQG problem with an additive penalization. This LQG problem takes the form of a stochastic game where one player is the original controller of the LEQG problem. The other two players result from the duality relation. They appear due to the reduction of the problem from LEQG to LQG. We have also investigated sufficient conditions to have an LQG game problem. While in the recursive procedure the data for the model and the problem setup can be chosen freely, the state noise and the randomization should satisfy certain conditions that, at the generic time $t$, involve the previously computed coefficient $P_{t+1}$ in the leading term of the quadratic value function $V_{t+1}(x_{t+1})$. This is one of the reasons why we considered the system and randomization noises $\Lambda_t$ and $\Xi_t$ as time-dependent. The coefficient $P_t$ is deterministic and can be determined \emph{} via the backward recursion \eqref{eq:recursion:P} in Theorem \ref{theo:main:recursions} based on the estimated model data. Hence, the above-mentioned conditions can actually be checked when, given the other data, one chooses/verifies the state noise and the randomization intensity. Essentially, the state noise should not be too diffuse if $P_t$ is large. At the same time, the randomization should have no bias and not be more diffuse than a bound given by $P_t$, the risk sensitivity parameter $\theta$, and the control weight factor $N_t$.

%
%


\end{document}